\documentclass[a4paper]{amsart}
\usepackage{amssymb,latexsym}
\usepackage[utf8x]{inputenc}
\usepackage{amsmath,amsthm}
\usepackage{amsfonts,mathrsfs}
\usepackage{hyperref}
\usepackage{cleveref}
\usepackage{enumerate,units}
\usepackage[all]{xy}
\usepackage{graphicx,url}

   
\theoremstyle{plain}
\newtheorem{theorem}{Theorem}[section]
\newtheorem{maintheorem}[theorem]{Main Theorem}
\newtheorem{corollary}[theorem]{Corollary}
\newtheorem{lemma}[theorem]{Lemma}
\newtheorem{proposition}[theorem]{Proposition}

\newtheorem*{proposition*}{Proposition}

\newtheorem{fact}[theorem]{Fact}
\newtheorem{claim}{Claim}[theorem]

\newtheorem*{theorem*}{Theorem}
\newtheorem*{context*}{Assumption $\diamondsuit$}

\theoremstyle{definition}
\newtheorem{definition}[theorem]{Definition}

\newtheorem{example}[theorem]{Example}

\theoremstyle{remark}
\newtheorem{remark}[theorem]{Remark}

\newtheorem{question}[theorem]{\textbf{Question}}
\newtheorem{conjecture}[theorem]{\textbf{Conjecture}}

\numberwithin{equation}{section}
\newcommand{\forkindep}[1][]{%
  \mathrel{
    \mathop{
      \vcenter{
        \hbox{\oalign{\noalign{\kern-.3ex}\hfil$\vert$\hfil\cr
              \noalign{\kern-.7ex}
              $\smile$\cr\noalign{\kern-.3ex}}}
      }
    }\displaylimits_{#1}
  }
}

\newenvironment{claimproof}[1][\proofname]
  {%
    \proof[#1]%
  }
  {%
    \endproof%
  }

\newcounter{step}                   
    {\hfill $\clubsuit$             
     \vspace{7pt}\par}

\makeatletter
\providecommand*{\cupdot}{%
  \mathbin{%
    \mathpalette\@cupdot{}%
  }%
}
\newcommand*{\@cupdot}[2]{%
  \ooalign{%
    $\m@th#1\cup$\cr
    \hidewidth$\m@th#1\cdot$\hidewidth
  }%
}
\makeatother

\newcommand{\dom}{\mathrm{Dom}}

\newcommand{\Sh}{\text{Sh}}

\newcommand{\E}{\mathrel{E}}
\newcommand{\R}{\mathrel{R}}

\DeclareMathOperator{\dcl}{dcl} 
\DeclareMathOperator{\tp}{tp}

\DeclareMathOperator{\ch}{ch_T}
\DeclareMathOperator{\Sk}{Sk}


%
%
  
\begin{document}
\title{Infinite Cliques in Simple and Stable Graphs}

\author{Yatir Halevi}
\address{Department of Mathematics\\ University of Haifa\\ 199 Abba Khoushy Avenue \\ Haifa \\Israel}
 \email{ybenarih@campus.haifa.ac.il}

\author{Itay Kaplan}
\address{Einstein Institute of Mathematics, Hebrew University of Jerusalem, 91904, Jerusalem
Israel.}
\email{kaplan@math.huji.ac.il}

\author{Saharon Shelah}
\address{Einstein Institute of Mathematics, Hebrew University of Jerusalem, 91904, Jerusalem
Israel.}
\email{shelah@math.huji.ac.il}

\thanks{The first author would like to thanks the Israel Science Foundation for its support of this research (grant No. 555/21 and 290/19). The second author would like to thank the Israel Science Foundation for their support of this research (grant no. 1254/18). The third author would like to thank the Israel Science Foundation grants no. 1838/19 and  2320/23. This is Paper no. 1211 in the third author's  publication list.}

\keywords{chromatic number; stable graphs; simple graphs; infinite cliques; Taylor's Conjecture}
\subjclass[2020]{03C45; 05C15; 03C50}

\begin{abstract}
  Suppose that $G$ is a graph of cardinality $\mu^+$ with  chromatic number $\chi(G)\geq \mu^+$. One possible reason that this could happen is if $G$ contains a clique of size $\mu^+$. We prove that this is indeed the case when the edge relation is stable. When $G$ is a random graph (which is simple but not stable), this is not true. But still if in general the complete theory of $G$ is simple, $G$ must contain finite cliques of unbounded sizes.
\end{abstract}

\maketitle

\section{Introduction}
The chromatic number $\chi(G)$ of a graph $G$ is the minimal cardinal $\kappa$ for which there exists a vertex coloring with $\kappa$ colors such that connected vertices get different colors. 

Research around graphs having an uncountable chromatic number has a long history, see e.g., \cite[Section 3]{komjath}. This topic is set-theoretic in nature, with many results being independent of the axioms of set theory (ZFC). In \cite{1196,1211} we studied a specific conjecture (Taylor's strong conjecture) in the context of \emph{stable} graphs (in ZFC): a graph whose first order theory is stable (a model-theoretic notion of tameness, see Section \ref{sec:Preliminaries} for all the definitions). It turns out that a very close relative of this conjecture holds for stable graphs (although it does not hold in general).

More specifically, we showed that if a stable graph has chromatic number $>\beth_2(\aleph_0)$ then this implies the presence of all the finite subgraphs of a shift graph $\Sh_n(\omega)$ for some $0<n < \omega$, where for a cardinal $\kappa$, the shift graph $\Sh_n(\kappa)$ is the graph whose vertices are increasing $n$-tuples of ordinals in $\kappa$, and two such tuples $s,t$ are connected if for every $1\leq i\leq n-1$, $s(i)=t(i-1)$ or vice-versa (see Example \ref{E:shift}). In turn, this implies that the chromatic numbers of elementary extensions of said graph are unbounded. An important example for this paper is the case $n=1$: $\Sh_1(\kappa)$ is the complete graph on $\kappa$. For more on this result, see \cite{1196,1211}. 

In this paper, we take the first step towards identifying the $n$ in the previous paragraph, by considering not only the chromatic number of the graph, but its \emph{cardinality} as well, as we now explain.

Bounds for the chromatic number of the shift graph was computed by Erd\"os and Hajnal \cite{EH}: assuming the generalized continuum hypothesis (\emph{GCH}), 
\[\chi(\Sh_{n+1}(\kappa^{+n}))=\kappa\]
for all $n<\omega$, see Fact \ref{F:Sh-high chrom}. In particular (and trivially) $\chi(\Sh_1(\kappa))= \kappa$. Thus, it makes sense to ask the following question:

\begin{question} \label{Q:The question on Ch}
  Suppose that $G$ is a stable graph and for simplicity also GCH. Assume that for every cardinal $\kappa$, there is some $G' \equiv G$ (i.e., $\mathrm{Th}(G)=\mathrm{Th}(G')$) of cardinality $|G'| \leq \kappa^{+n}$ satisfying that $\chi(G') \geq \kappa^+$, then is it true that for some $m \leq n$, $G$ contains all finite subgraphs of $\Sh_{m}(\omega)$?
\end{question}

\begin{remark}
  Proposition \ref{P: stable no arbt large finite cliques} and Remark \ref{R:even stable theory} explain why we restricted ourselves to successor cardinals. 
\end{remark}

In this paper we deal with the case $n=1$, and we manage to give a satisfying solution to this case (and more) assuming that the theory of $G$ is simple or that the edge relation is stable (both weaker assumptions than stability of the theory). 
The following sums up the main results of this paper: 

\begin{maintheorem}[Propositions \ref{P:cannot embed infinite complete graph} and \ref{P:stable}]
  Let $G=(V,E)$ be a graph and $T=\mathrm{Th}(G,E)$ be its first order theory. Assume that $|G|=\mu^+$ and $\chi(G)\geq \mu^+$ for some infinite cardinal $\mu$.
  
  \begin{enumerate}
  \item Assuming $T$ is a simple theory then $G$ contains cliques of any finite size. 
  \item Assuming the edge relation $E$ is stable then $G$ contains an infinite clique of cardinality $\mu^+$.
  \end{enumerate}
  \end{maintheorem}

Note that the conclusion of item (1) (together with L\"owenheim-Skolem and compactness) implies the existence of $G'\succ G$ of cardinality $\mu^+$ that contains an infinite clique of cardinality $\mu^+$. The conclusion of item (2) is stronger: we can find such a clique already in $G$ itself.

\subsection{Organization of the paper} In Section \ref{sec:Preliminaries} we go over the relevant basic definitions in model theory and graph theory. Section \ref{sec:infinite cliques} contains the proof of the main results. Section \ref{S:chromatic} reframes the main results in terms of the function hinted on in Question \ref{Q:The question on Ch}. Finally, in the Appendix \ref{A:Hajnal-Komjath example} we analyze an example of Hajnal and Komj\'{a}th that was given by them to refute Taylor's strong conjecture and show that the theory of this graph is not stable (in fact, we show more: that it is not simple and has IP).

\section{Preliminaries}\label{sec:Preliminaries}

We use small latin letters $a,b,c$ for tuples and capital letters $A,B,C$ for sets. We also employ the standard model theoretic abuse of notation and write $a\in A$ even for tuples when the length of the tuple is immaterial or understood from context.

\subsection{Stability and Simplicity}
We use fairly standard model theoretic terminology and notation, see for example \cite{TZ}. We gather some of the needed notions. For stability, the reader may also consult with \cite{classification}.

We denote by $\tp(a/A)$ the complete type of $a$ over $A$. 
A structure $M$ is \emph{$\kappa$-saturated}, for a cardinal $\kappa$, if any type $p$ over $A$ with $|A|< \kappa$ is realized in $M$. The structure $M$ is \emph{saturated} if it is $|M|$-saturated.

The \emph{monster model} of a complete theory $T$, denoted here by $\mathbb{U}$, is a large saturated model containing all sets and models (as elementary substructures) we  encounter.\footnote{There are set theoretic issues in assuming that such a model exists, but these are overcome by standard techniques from set theory that ensure the generalized continuum hypothesis from some point on while fixing a fragment of the universe, see \cite{HaKa}. The reader can just accept this or alternatively assume that $\mathbb{U}$ is merely $\kappa$-saturated and $\kappa$-strongly homogeneous for large enough $\kappa$.}  All subsets and models will be \emph{small}, i.e. of cardinality $<|\mathbb{U}|$.

Given a first order theory $T$, a formula $\varphi(x,y)$ is $\emph{stable}$ if we cannot find elements $\langle a_i\in \mathbb{U}: i<\omega\rangle$ such that $\mathbb{U}\models \varphi(a_i,a_j)\iff i<j$. 

For any formula $\varphi(x,y)$ we set $\varphi(y,x)^{\text{opp}}=\varphi(x,y)$: it is the same formula but the roles of the variables replaced. The following is folklore.
\begin{fact}\label{F:def of types}
Work in a complete first order theory $T$ with infinite models. Let $\varphi(x,y)$ be a stable formula. There is a formula $\psi(y,z)$ such that  any $\varphi$-type $p$ over a model $M$ is definable by an instance of $\psi$ over $M$, i.e. for any such $\varphi$-type $p$ there is an element $c\in M$ such that $\varphi(x,b)\in p\iff M\vDash  \psi(b,c)$. Moreover, $\psi(y,z)$ can be chosen such that for any $c\in M$, $\psi(y,c)$ is equivalent to a boolean combination of instances of $\varphi^{\text{opp}}$. 
\end{fact}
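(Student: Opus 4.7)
The plan is to prove the classical Shelah definability-of-types theorem for a stable formula. Fix $\varphi(x,y)$ stable and, given $p \in S_\varphi(M)$, extend $p$ to a global $\varphi$-type $q$ that is finitely satisfiable in $M$ (a coheir), use a Morley sequence of $q$ over $M$ to read off a definition of $p$, and then leverage stability to rewrite that definition in a uniform way using parameters from $M$. Compactness at the end gives a single schema $\psi(y,z)$ that works for all $\varphi$-types over all models.

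Concretely, taking $a \models q$ one gets $\varphi(x,b) \in p \Longleftrightarrow \mathbb{U} \models \varphi(a,b)$ for all $b \in M$, so $\varphi(a,y)$ defines $p$ externally by a single instance of $\varphi^{\text{opp}}$. To bring parameters into $M$, I would build a Morley sequence $(a_i)_{i<\omega}$ of $q$ over $M$ and, using finite satisfiability of $q$ in $M$ combined with a Ramsey/indiscernibility extraction, replace it by an $M$-indiscernible sequence $(m_i)_{i<\omega}$ inside $M$ whose $\varphi$-trace on $M$ matches that of the $a_i$. Stability of $\varphi$ bounds the $\varphi$-alternation of such a sequence against any fixed $b$ by a single integer $n$, and a majority/averaging argument then yields a fixed boolean combination of instances $\varphi(m_{i_1},y),\ldots,\varphi(m_{i_n},y)$ that defines $p$ on $M$.

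To produce the uniform $\psi(y,z)$, I would argue by compactness: if no single formula sufficed for all models and all $\varphi$-types, one could stitch the counterexamples together to force the alternation rank of $\varphi$ to be infinite, contradicting stability. The moreover clause comes for free, because the construction yields $\psi(y,z_1,\ldots,z_n)$ as an explicit boolean combination of formulas $\varphi(z_i,y)$, that is, of instances of $\varphi^{\text{opp}}$.

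The main obstacle is the passage from the external parameter $a \in \mathbb{U}\setminus M$ to parameters actually in $M$, together with the claim that the definition can be written as a boolean combination of instances of $\varphi^{\text{opp}}$ rather than as some arbitrary formula over $M$. This is precisely where stability (and not merely the existence of a coheir extension) is used in an essential way.
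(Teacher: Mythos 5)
There is a genuine gap at exactly the point you yourself identify as the main obstacle: transporting the defining parameters from $\mathbb{U}$ into $M$. The object you propose to use does not exist: an $M$-indiscernible sequence $(m_i)_{i<\omega}$ whose terms lie in $M$ is constant (each $m_i$ realizes $\tp(m_0/M)$, which contains $y=m_0$), and an element of $M$ whose ``$\varphi$-trace on $M$ matches that of the $a_i$'' would be a realization of $p$ inside $M$, which in general there is none. The charitable repair --- use finite satisfiability of $p$ in $M$ to pick $c_0,\dots,c_{2N}\in M$ with $c_i\models p\restriction\{c_0,\dots,c_{i-1}\}$ and then take a majority vote --- is not sound either: let $\varphi(x,y)$ be an equivalence relation $E$ with infinitely many infinite classes and let $p\in S_\varphi(M)$ be the $\varphi$-type of an element of one fixed class $C$ meeting $M$. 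Such a sequence can consist entirely of pairwise inequivalent elements outside $C$, and then the majority of the values $\varphi(c_i,b)$ is ``false'' for every $b\in C\cap M$ although $\varphi(x,b)\in p$. The classical arguments avoid this by first fixing a finite fragment $p_0\subseteq p$ of maximal $\varphi$-rank (or an equivalent device) and building the approximating sequence relative to $p_0$; that rank input is precisely what forces the vote to come out right for all $b\in M$ simultaneously, and it is absent from your sketch. The concluding compactness argument for the uniform $\psi(y,z)$ is also only gestured at, but that part is routine once the single-type case is genuinely done.

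For comparison: the paper does not reprove any of this. It quotes Shelah's uniform definability theorem (\emph{Classification Theory} II.2.12) for the existence of a single $\psi(y,z)$ working over every parameter set with at least two elements, quotes Pillay's Lemma 2.2(i) for the fact that over a model the $\varphi$-definition is a boolean combination of instances of $\varphi^{\text{opp}}$, and observes that two $M$-definitions of the same type over a model are equivalent. Your plan (coheir, Morley sequence, bounded alternation, majority rule, compactness for uniformity) is the right family of ideas for reproving those citations from scratch, but as written the central step does not close.
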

\begin{proof}
By \cite[Theorem II.2.12]{classification}, there is some $\psi(y,z)$ such that for any $\varphi$-type $p$ over any set $A$ ($|A|\geq 2$), there is some $c_p\in A$ such that $\psi(y,c_p)$ defines $p$. By \cite[Lemma 2.2(i)]{Pillay}, if $p\in S_{\varphi}(M)$, where $M$ is a model, then $p$ is definable by a boolean combination of instances of $\varphi^{\text{opp}}$. But then as $M$ is a model $\psi(y,c_p)$ is equivalent to such a boolean combination.
\end{proof}

A theory $T$ is \emph{stable} if all formulas are stable.

Next we define simplicity; we give an equivalent definition, using the notion of dividing for types, see \cite[Proposition 7.2.5]{TZ}. Given a first order theory $T$ with a monster model $\mathbb{U}$, a formula $\varphi(x,b)$ with $b\in \mathbb{U}$ \emph{divides} over $A$ if there is a sequence of realizations $\langle b_i\in \mathbb{U}:i<\omega\rangle$ of $\tp(b/A)$ such that $\{\varphi(x,b_i):i<\omega\}$ is $k$-inconsistent for some $k<\omega$ (every set of size $k$ is inconsistent). A complete type $p$ over $B$ divides over $A$ if it contains some formula which divides over $A$.

The theory $T$ is \emph{simple} if for every complete type $p$ over $B$ there is some $A\subseteq B$ with $|A|\leq |T|$ such that $p$ does not divide over $A$. Every stable theory is simple.

The main tool we will use from simplicity theory is forking calculus. Non-forking independence is 3-place relation on sets (or tuples) denoted by $\forkindep$. We will not go over all the properties that non-forking independence enjoys in simple theories; see \cite[Chapter 7]{TZ} for more information.

\subsection{Graph theory}
Here we gather some facts on graphs and the chromatic number of graphs (see also \cite{1196}).

By a \emph{graph} we mean a pair $G=(V,E)$ where $E\subseteq V^2$ is symmetric and irreflexive. A \emph{graph homomorphism} between $G_1=(V_1,E_1)$ and $G_2=(V_2,E_2)$ is a map $f:V_1\to V_2$ such that $f(e)\in E_2$ for every $e\in E_1$. If $f$ is injective we will say that $f$ embeds $G_1$ into $G_2$ a subgraph. If in addition we require that $f(e)\in E_2$ if and only if $e\in E_1$ we will say that $f$ embeds $G_1$ into $G_2$ as an induced subgraph.

\begin{definition}
Let $G=(V,E)$ be a graph.
\begin{enumerate}
\item For a cardinal $\kappa$, a \emph{vertex coloring} (or just coloring) of cardinality $\kappa$ is a function $c:V\to \kappa$ such that $x\E y$ implies $c(x)\neq c(y)$ for all $x,y\in V$.
\item The \emph{chromatic number} $\chi(G)$ is the minimal cardinality of a vertex coloring of $G$.
\end{enumerate}
\end{definition}

The following is easy and well known.

\begin{fact}\label{F:basic-prop-chi}
Let $G=(V,\E)$ be a graph.
If $V=\bigcup_{i\in I} V_i$ then $\chi(G)\leq \sum_{i\in I} \chi(V_i, E\restriction V_i)$.
\end{fact}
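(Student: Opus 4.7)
The plan is the standard disjoint-union-of-colorings argument. For each $i \in I$, let $\kappa_i = \chi(V_i, E\restriction V_i)$ and fix a vertex coloring $c_i \colon V_i \to \kappa_i$ witnessing this value. The goal is to glue the $c_i$ together after relabelling their color palettes to be pairwise disjoint, so that the resulting palette has cardinality $\sum_{i \in I} \kappa_i$.

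More concretely, I would well-order $I$ and, for each $v \in V$, let $i(v)$ be the least $i \in I$ with $v \in V_i$ (such an $i$ exists because the $V_i$ cover $V$). Then define $c \colon V \to \bigsqcup_{i \in I} \kappa_i$ by $c(v) = (c_{i(v)}(v),\, i(v))$. To verify that $c$ is a proper coloring, suppose $x \E y$. If $i(x) \neq i(y)$ then $c(x)$ and $c(y)$ differ in the second coordinate and we are done; if $i(x) = i(y) = i$ then $x,y \in V_i$ and $xEy$ holds in the induced subgraph $(V_i, E\restriction V_i)$, so $c_i(x) \neq c_i(y)$, and again $c(x) \neq c(y)$. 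Hence $\chi(G) \leq |\bigsqcup_i \kappa_i| = \sum_{i \in I}\kappa_i$, as claimed.

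There is essentially no obstacle here beyond bookkeeping: the content is just that disjointifying the color ranges across the $V_i$ turns a family of local colorings into a global one. The only conceptual point worth flagging is that one needs a (set-theoretic) choice of $i(v)$ for each $v$, which is harmless, and that ``$\sum_{i \in I}\kappa_i$'' is interpreted as the cardinality of the disjoint union, so the inequality is automatic once the coloring is constructed.
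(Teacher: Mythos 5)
Your proof is correct and follows essentially the same route as the paper: choose for each vertex an index $i(v)$ with $v\in V_{i(v)}$ and color it by the pair $(c_{i(v)}(v),\,i(v))$, so that colors from different parts are automatically distinct. The paper's proof is identical in substance, just with the verification of properness left implicit.
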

\begin{proof}
Let $c_i:V_i\to \mu_i$ be a coloring of $(V_i,E\restriction V_i)$. Define a coloring $c:V\to \bigcup \{\mu_i\times\{i\}:i\in I\}$ by choosing for any $v\in V$ an $i_v\in I$ such that $v\in V_{i_v}$ and setting $c(v)=(c_{i_v}(v),i_v)$.
\end{proof}

\begin{example} \label{E:shift}
For any finite number $r\geq 1$ and any linearly ordered set $(A,<)$, let $\Sh_r(A)$, (\emph{the shift graph on $A$}) be the following graph: its set of vertices is the set of strictly increasing $r$-tuples from $A$, $\langle s_0<\dots<s_{r-1}\rangle $, and we put an edge between $s$ and $t$ if for every $1\leq i\leq r-1$, $s_i=t_{i-1}$, or vice-versa. It is an easy exercise to show that $\Sh_r(A)$ is a connected graph. If $r=1$ this gives $K_{A}$, the complete graph on $A$.
\end{example}

\begin{example}[Symmetric Shift Graph]
Let $r\geq 1$ be any natural number and $A$ any set. The \emph{symmetric shift graph} $\Sh_r^{sym}(A)$ is defined similarly as the shift graph but with set of vertices the set of distinct $r$-tuples.
Note that $\Sh_r(A)$ is an induced subgraph of $\Sh_r^{sym}(A)$ (and that for $r=1$ they are both the complete graph on $A$).

Since for any infinite set $A$, $\Sh_r^{sym}(A)$ is definable in $(A,=)$, it is stable. Moreover, for any two infinite sets $A$ and $B$, $\Sh_r^{sym}(A)\equiv \Sh_r^{sym}(B)$. Since every infinite set $A$ is saturated, $\Sh_r^{sym}(A)$ is saturated.
\end{example}

\begin{fact}\cite[Fact 2.6]{1196}\cite[Proof of Theorem 2]{EH-shift}\label{F:Sh-high chrom}
Let $1\leq r<\omega$ be a natural number and $\mu$ be an infinite cardinal,  \[\chi\left(\Sh_r^{sym}(\beth_{r-1}\left(\mu\right))\right)\leq\mu\] and \[\chi\left(\Sh_r(\beth_{r-1}\left(\mu\right)^{+})\right)\geq\mu^{+}.\]
\end{fact}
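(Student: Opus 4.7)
The plan is to prove the two bounds separately: the upper bound by induction on $r \geq 1$ via an explicit combinatorial coloring in the spirit of Erd\H{o}s--Hajnal, and the lower bound by a direct application of the Erd\H{o}s--Rado partition theorem $\beth_{r-1}(\mu)^+ \to (\mu^+)^r_\mu$.

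For the upper bound I would induct on $r$. The base case $r=1$ is immediate, since $\Sh_1^{sym}(\mu) = K_\mu$ admits the identity as a proper $\mu$-coloring. For the inductive step, I would identify the vertex set $A$ (of cardinality $\beth_{r-1}(\mu) = 2^{\beth_{r-2}(\mu)}$) with $2^B$, where $|B| = \beth_{r-2}(\mu)$, and for distinct $f,g\in A$ let $d(f,g)\in B$ denote the least coordinate on which $f$ and $g$ disagree. Given a vertex $(f_0,\ldots,f_{r-1})$ one can extract combinatorial data from the pairwise first-difference function $d$ restricted to $\{f_0,\ldots,f_{r-1}\}$ -- encoded as a structured tuple in $B$ together with finitely many bits recording which $f_i$ takes which value at the relevant coordinates -- and then feed this into an inductive $\mu$-coloring of $\Sh_{r-1}^{sym}(B)$ supplied by the induction hypothesis. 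The crucial feature is that adjacency in $\Sh_r^{sym}(A)$ forces a shift-like relation between the corresponding pieces of data, arranged so that adjacent vertices receive different colors; the total palette has size at most $\mu$ times a finite factor, hence $\mu$.

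For the lower bound, set $\kappa = \beth_{r-1}(\mu)^+$, and note that the vertex set of $\Sh_r(\kappa)$ is literally $[\kappa]^r$, so a proper vertex coloring by $\mu$ colors is simply a function $c\colon [\kappa]^r \to \mu$. Assuming toward contradiction that such a $c$ is proper, Erd\H{o}s--Rado supplies $H\subseteq \kappa$ with $|H| = \mu^+$ on which $c$ is constant. Choosing any $\alpha_0 < \alpha_1 < \cdots < \alpha_r$ in $H$, the $r$-tuples $s := (\alpha_0,\ldots,\alpha_{r-1})$ and $t := (\alpha_1,\ldots,\alpha_r)$ are both vertices of $\Sh_r(\kappa)$ satisfying $s_i = t_{i-1}$ for all $1\leq i\leq r-1$, hence they are adjacent in $\Sh_r(\kappa)$; but $c(s) = c(t)$, contradicting that $c$ is a proper coloring.

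The hard part will be the precise design of the combinatorial coloring in the upper-bound induction: one must ensure that the chosen encoding distinguishes every pair of adjacent vertices in $\Sh_r^{sym}(A)$, including the degenerate situations in which the first-difference data has repeated coordinates or in which the ``new'' first difference introduced by the shift coincides with an old one. Once this case analysis is carried out the induction closes cleanly. The lower bound, by contrast, is essentially a one-line consequence of Erd\H{o}s--Rado once one recognises that a vertex coloring of $\Sh_r(\kappa)$ is literally a coloring of $r$-element subsets of $\kappa$.
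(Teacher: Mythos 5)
The paper states this as a cited Fact (from \cite{1196} and Erd\H{o}s--Hajnal) and gives no proof of its own, so your argument has to stand on its own. Your lower bound does: a proper $\mu$-colouring of $\Sh_r(\kappa)$ for $\kappa=\beth_{r-1}(\mu)^+$ is literally a map $c\colon[\kappa]^r\to\mu$, the Erd\H{o}s--Rado relation $\beth_{r-1}(\mu)^+\to(\mu^+)^r_\mu$ gives a set $H$ of size $\mu^+$ on which $c$ is constant, and any $r+1$ increasing ordinals from $H$ yield two adjacent vertices of $\Sh_r(\kappa)$ with the same colour. That half is complete and is the standard argument.

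The upper bound, as submitted, is a plan rather than a proof: the colouring is never actually defined, and the verification that adjacent vertices of $\Sh_r^{sym}(A)$ are separated --- which is the entire content of the inequality --- is explicitly deferred (``the hard part will be the precise design\dots''). The strategy is the right one (identify $A$ with ${}^{B}2$ for $|B|=\beth_{r-2}(\mu)$, pass to the tuple of first differences $\delta_i=d(f_i,f_{i+1})$, and combine an inductive $\mu$-colouring of $\Sh_{r-1}^{sym}(B)$ with the finite matrix of bits $(f_i(\delta_j))_{i,j}$), and the two essential cases do close: if the two difference-tuples attached to an adjacent pair are distinct tuples with pairwise distinct entries they form an edge of $\Sh_{r-1}^{sym}(B)$ and the inductive colour separates them, while if they coincide then all $\delta_i$ equal a single $\delta$ and the bits $f_0(\delta),\dots,f_r(\delta)$ alternate, so the recorded rows differ. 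But the degenerate configurations you yourself flag --- difference-tuples with repeated entries (which are not vertices of $\Sh_{r-1}^{sym}(B)$ at all), and neighbours realised through the coordinatewise disjunction in the definition of $E$ on $\Sh_r^{sym}$ rather than a clean one-sided shift --- are exactly where an unspecified encoding can fail to separate two adjacent vertices, and they are not dispatched by the data you name without a concrete case analysis. Until the colouring is written down and checked in those cases, the first inequality is not proved.
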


\begin{remark}
Note that it follows that $\chi(\Sh_{r}(\beth_{r-1}(\mu)))\leq \mu$.
\end{remark}

\begin{lemma}\label{L:sat of sym}
If $A$ is infinite then  $\Sh_r^{sym}(A)$ is a saturated model of $\mathrm{Th}(\Sh_r^{sym}(\omega))$ of cardinality $|A|$.
\end{lemma}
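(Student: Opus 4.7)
The assertion $\Sh_r^{sym}(A) \equiv \Sh_r^{sym}(\omega)$ and the cardinality claim $|\Sh_r^{sym}(A)| = |A^r| = |A|$ are already observed in the preceding example, so my plan is to focus on showing that $G := \Sh_r^{sym}(A)$ is $|A|$-saturated.

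The plan is to pass to the two-sorted expansion $G^{+} = (A, V;\, \pi_0, \dots, \pi_{r-1}, E)$, where $V$ is the vertex sort (the set of distinct $r$-tuples from $A$, a $\emptyset$-definable subset of $A^r$), the $\pi_i : V \to A$ are the coordinate projections, and $E$ is the edge relation. Then $G$ is a reduct of $G^{+}$, obtained by restricting to the sort $V$ and the language $\{E\}$. Saturation transfers to reducts by the usual compactness argument: any $\{E\}$-type $p(x)$ over $B \subseteq V$ with $|B| < |A|$ is finitely satisfiable in $G^{+}$, hence extends to a complete type in the full language of $G^{+}$ over $B$ with $x$ of sort $V$, and any realization of the latter in $G^{+}$ will lie in $V$ and realize $p$. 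It therefore suffices to show that $G^{+}$ is $|A|$-saturated.

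To establish this, given $B \subseteq A \cup V$ with $|B| < |A|$, set
\[
A_0 := (B \cap A) \cup \bigcup \{\pi_i(v) : v \in B \cap V,\ 0 \leq i < r\}.
\]
Since $r$ is finite, $|A_0| < |A|$, so $|A \setminus A_0| = |A|$. The group $\mathrm{Sym}(A / A_0)$ acts on $G^{+}$ by automorphisms (coordinate-wise on $V$), and each such automorphism fixes $B$ pointwise: parameters in $B \cap A$ lie in $A_0$ by construction, and parameters in $B \cap V$ have all their coordinates in $A_0$. Consequently, any two elements of $A \cup V$ sharing the same \emph{equality pattern} over $A_0$ --- for an $A$-element, either the specific element of $A_0$ it equals or the label $\mathrm{new}$; for a $V$-element, a function $\{0, \dots, r-1\} \to A_0 \sqcup \{\mathrm{new}\}$ recording which coordinates are named, with distinct ``new'' slots referring to pairwise distinct fresh elements --- lie in a common $\mathrm{Sym}(A / A_0)$-orbit and thus realize the same type over $B$.

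Finally, every consistent equality pattern is realized in $G^{+}$: use the prescribed elements of $A_0$ where required, and fill in the remaining (at most $r$) slots with fresh, pairwise distinct elements of $A \setminus A_0$, which is possible as $|A \setminus A_0| = |A|$. The only substantive step in this plan is the homogeneity-by-automorphism argument above, which reduces the saturation of $G^{+}$ to the straightforward combinatorial task of realizing equality patterns; this is then solved immediately by the abundance of elements in $A \setminus A_0$.
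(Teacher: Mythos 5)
Your argument is correct in outline but takes a more hands-on route than the paper. The paper's proof is a one-liner: $(A,=)$ is a saturated pure set, $\Sh_r^{sym}(A)$ is definable (interpretable) in it, and saturation passes to structures interpretable in a saturated structure. You are unfolding that black box: the two-sorted expansion $G^{+}$ makes the interpretation explicit, the reduct step is the standard ``types pull back'' argument, and the $\mathrm{Sym}(A/A_0)$-orbit analysis is the concrete content of ``the pure set eliminates quantifiers, so everything is governed by equality patterns.'' What your route buys is self-containedness; what it costs is that you must actually verify that quantifier-elimination-flavoured statement rather than cite it, and that is where the one soft spot lies.

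Your homogeneity argument shows that two elements \emph{of $G^{+}$} with the same pattern over $A_0$ are conjugate over $B$ and hence have the same type; and you show every consistent pattern is realized. But for saturation you must realize an arbitrary complete type $p(x)$ over $B$ consistent with $\mathrm{Th}(G^{+}_B)$, and such a $p$ is a priori just a consistent set of formulas: you need to know that $p$ is \emph{determined} by the pattern it induces (i.e., by which formulas $\pi_i(x)=a$, $a\in A_0$, it contains), so that your pattern-realizer actually realizes $p$ rather than merely having the same type as every element of $G^{+}$ with that pattern. As written, the orbit argument only covers types that are already realized in $G^{+}$. The fix is short: given $\varphi(x)\in p$, let $A_\varphi\subseteq A_0$ be the finite coordinate-closure of the parameters of $\varphi$; since $p$ is finitely satisfiable in $G^{+}$, some $c\in G^{+}$ satisfies $\varphi$ together with the finitely many pattern conditions of $p$ over $A_\varphi$; then $c$ and your pattern-realizer $c'$ are conjugate under $\mathrm{Sym}(A/A_\varphi)$, which fixes the parameters of $\varphi$, so $\varphi(c')$ holds. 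Adding this localization (or simply invoking quantifier elimination for infinite pure sets) completes the proof.
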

\begin{proof}
This follows easily from the fact that $(A,=)$ is saturated and that $\Sh_r^{sym}(A)$ is definable in $(A,=)$.
\end{proof}

We prove two easy results on the theory of the shift graphs.
\begin{lemma}\label{L:embed model of shift into symmetric}
Every model of  $T=\mathrm{Th}(\Sh_r(\omega))$ (of cardinality $\lambda$) can be embedded as an induced subgraph of $\Sh_r^{sym}(A)$ for some infinite set $A$ (of cardinality $\lambda$). 
\end{lemma}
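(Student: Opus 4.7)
The plan is to combine a compactness argument with the universality of the saturated graph $\Sh_r^{sym}(\lambda)$ supplied by Lemma~\ref{L:sat of sym}. I would expand the language $\{E\}$ by a fresh constant $c_a$ for each $a\in M$ and let $\Sigma$ be the theory consisting of $\mathrm{Th}(\Sh_r^{sym}(\omega))$ together with the quantifier-free diagram of the graph $M$ in these constants, i.e.\ $c_a\neq c_b$ for $a\neq b$, $E(c_a,c_b)$ for adjacent pairs of $M$, and $\neg E(c_a,c_b)$ for distinct non-adjacent pairs. A model of $\Sigma$ is exactly a model of $\mathrm{Th}(\Sh_r^{sym}(\omega))$ together with a distinguished induced copy of $M$.

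The first task would be to prove that $\Sigma$ is finitely satisfiable. Any finite fragment $\Sigma_0 \subseteq \Sigma$ mentions only finitely many constants $c_{a_1},\dots,c_{a_n}$, so satisfying it amounts to embedding the finite induced subgraph $M_0 = M\restriction\{a_1,\dots,a_n\}$ into $\Sh_r^{sym}(\omega)$ as an induced subgraph. Since $M \equiv \Sh_r(\omega)$ and the existence of a finite induced subgraph of a given isomorphism type is first-order expressible, $M_0$ is isomorphic to some finite induced subgraph of $\Sh_r(\omega)$; as $\Sh_r(\omega)$ itself sits inside $\Sh_r^{sym}(\omega)$ as an induced subgraph (increasing tuples are in particular distinct tuples), this gives the required embedding. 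Compactness then yields a model $(N,(c_a^N)_{a \in M}) \models \Sigma$.

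Next I would control the cardinality. Downward L\"owenheim–Skolem, applied in the language of size $\lambda$ while preserving all the constants, trims $N$ to satisfy $|N|=\lambda$; then $N \models \mathrm{Th}(\Sh_r^{sym}(\omega))$ and $a\mapsto c_a^N$ is an induced subgraph embedding $M \hookrightarrow N$. By Lemma~\ref{L:sat of sym}, $\Sh_r^{sym}(\lambda)$ is a saturated model of $\mathrm{Th}(\Sh_r^{sym}(\omega))$ of cardinality $\lambda$, hence universal among models of that theory of cardinality $\leq \lambda$. Composing an elementary embedding $N \hookrightarrow \Sh_r^{sym}(\lambda)$ with $a\mapsto c_a^N$ produces the desired induced subgraph embedding $M \hookrightarrow \Sh_r^{sym}(\lambda)$, so one may take $A=\lambda$.

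I do not foresee a serious obstacle. The one point that deserves care is the observation that the isomorphism type of a finite induced subgraph is preserved under elementary equivalence, since this is exactly what powers the compactness argument; everything else is bookkeeping plus the saturation fact supplied by Lemma~\ref{L:sat of sym}.
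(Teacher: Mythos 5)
Your proof is correct and follows essentially the same route as the paper: the paper simply cites the preservation fact that a model of the universal theory of $\Sh_r^{sym}(\omega)$ embeds into a model of its full theory, while you unpack that fact into the standard diagram-plus-compactness argument, and both proofs finish identically via Lemma~\ref{L:sat of sym} and the universality of saturated models. No gaps.
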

\begin{proof}
Since $\Sh_r(\omega)$ is an induced subgraph of $\Sh_r^{sym}(\omega)$, the former satisfies the universal theory of $\Sh_r^{sym}(\omega)$. Thus every model $M$ of $T$ can be embedded as an induced subgraph of a model of $\mathrm{Th}(\Sh_r^{sym}(\omega))$. By Lemma \ref{L:sat of sym}, and the universality of saturated models, $M$ can be embedded as an induced subgraph of $\Sh_r^{sym}(A)$ for some infinite set $A$ of cardinality $|M|$.
\end{proof}

\begin{lemma}\label{L:properties of shift2}
For any cardinal $\mu$, the following holds for the graph $(\Sh_2(\mu),E)$:
\begin{enumerate}
\item Its complete theory is not stable.
\item Its graph relation is stable.
\item It is triangle-free.
\end{enumerate}
\end{lemma}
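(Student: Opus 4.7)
My plan is to handle (3), (2), (1) in order of increasing abstraction; the main obstacle is the common-neighbor case analysis needed for (1).

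For (3) (triangle-freeness), I would argue by direct inspection of the adjacency condition: two vertices $(\alpha_1, \beta_1), (\alpha_2, \beta_2) \in \Sh_2(\mu)$ are adjacent iff $\beta_1 = \alpha_2$ or $\beta_2 = \alpha_1$. After relabeling, one edge of a putative triangle gives $v_1 = (a, b), v_2 = (b, c)$ with $a < b < c$, so a third vertex $v_3 = (p, q)$ adjacent to both must satisfy $(b = p \vee q = a) \wedge (c = p \vee q = b)$. Each of the four subcases contradicts the strict linear order on $\mu$: for instance, $b = p$ together with $q = b$ forces $p = q$, while $q = a$ and $c = p$ produces $v_3 = (c, a)$, not a valid increasing pair since $c > a$.

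For (2) (stability of $E$), I would use Lemma \ref{L:embed model of shift into symmetric}: every model of $T := \mathrm{Th}(\Sh_2(\mu))$ embeds as an induced subgraph of some $\Sh_2^{sym}(A)$. Since $\Sh_2^{sym}(A)$ is quantifier-free definable in the pure equality structure $(A, =)$, whose theory is $\omega$-stable, the formula $E(x, y)$ is stable in $\mathrm{Th}(\Sh_2^{sym}(\omega))$. Any hypothetical order-property witness for $E$ in $T$ would transfer along the induced-subgraph embedding to one in $\Sh_2^{sym}(A)$, contradicting stability there. Hence $E$ is stable in $T$.

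For (1) (instability of the complete theory), I would exhibit an explicit unstable formula. Using the description $N((x, y)) = \{(y, k) : k > y\} \cup \{(i, x) : i < x\}$ of a vertex's neighborhood, a short case analysis shows that two non-adjacent distinct vertices of $\Sh_2(\mu)$ share either no common neighbor (nested or overlapping non-chain intervals), exactly one (a disjoint-interval chain, where the right endpoint of one is strictly below the left endpoint of the other), or infinitely many (matching left endpoints or matching right endpoints). Consequently the formula
\[
\varphi(x, y) := \exists z \, (E(x, z) \wedge E(z, y)) \wedge \neg \exists z_1 z_2 \, \bigl( z_1 \neq z_2 \wedge E(x, z_1) \wedge E(z_1, y) \wedge E(x, z_2) \wedge E(z_2, y) \bigr)
\]
expressing ``$x$ and $y$ have exactly one common neighbor'' is first-order in $E$. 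Taking $a_i := (0, 2i+1)$ and $b_j := (2j+2, 2j+3)$ in $\omega \subseteq \mu$, case analysis gives: for $i \leq j$ the pair is a forward chain with unique common neighbor $(2i+1, 2j+2)$; for $i = j+1$ the right endpoints coincide, giving infinitely many common neighbors; and for $i \geq j+2$ the interval $b_j$ is strictly nested inside $a_i$, giving none. Hence $\varphi(a_i, b_j) \iff i \leq j$, which (up to a harmless reindexing) is the order property.
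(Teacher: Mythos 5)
Your proof is correct, and parts (2) and (3) are essentially the paper's arguments: for (2) the paper simply observes that $\Sh_2(\mu)$ is an induced subgraph of the stable graph $\Sh_2^{sym}(\mu)$ (the special case of your appeal to Lemma \ref{L:embed model of shift into symmetric}), and your four-case check for triangle-freeness is the paper's computation in different notation. For (1) you take a genuinely different, and arguably cleaner, route. The paper also detects the order via common neighbours, but through the parametrized formulas $\psi_{n,m}(x,y)=\exists z\,(x\in X_n\wedge y\in Y_m\wedge z\E x\wedge z\E y)$, where $X_n,Y_m$ are defined over the vertices $(0,1),(0,n),(m,m+1),(m-1,m+1)$; each choice of $n<m$ only produces an order configuration of length about $m-n$, so the paper must invoke uniform definability together with compactness to conclude instability. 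Your formula ``$x$ and $y$ have exactly one common neighbour'' is parameter-free and is ordered by a single infinite sequence living inside $\Sh_2(\omega)\subseteq\Sh_2(\mu)$, so no compactness step is needed; the computation does check out, since $N(a_i)=\{(2i+1,k):k>2i+1\}$ (nothing lies below $0$), and intersecting this with $N(b_j)$ yields exactly one vertex $(2i+1,2j+2)$ when $i\leq j$, infinitely many when $2i+1=2j+3$, and none when $i\geq j+2$. One small caveat that does not affect the proof: your general trichotomy for arbitrary non-adjacent pairs is not quite right as stated, because two vertices with common left endpoint $x$ have precisely the common neighbours $\{(l,x):l<x\}$, a set of size $|x|$ which can be finite or empty (e.g.\ $(0,1)$ and $(0,2)$ have no common neighbour); but the argument only uses the computation for your specific $a_i,b_j$, where this case never arises.
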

\begin{proof}
(1) For any $1<n<\omega$, let $X_n$ be the definable set $\{x\in \Sh_2(\mu): (x\E (0,n)) \wedge \neg(x\E (0,1))\}$, it is easily seen that $X_n=\{(n,k): k>n\}$. For any $0<m<\omega$, let $Y_m=\{x\in \Sh_2(\mu): (x\E (m,m+1))\wedge \neg(x\E (m-1,m+1))\}$, it is easily seen that $Y_m=\{(k,m): k<m\}$. 

For any pair of natural numbers $(n,m)$, with $n>1$ and $n<m$, let $\psi_{n,m}(x,y)$ be the formula (with parameters $n,m$): $\exists z (x\in X_n\wedge y\in Y_m\wedge (z\E x)\wedge (z\E y))$. 

For any $l$ with $l<m-n$ let $a_l=(n,n+l)\in X_n$, and for any $0<k<m-n$ let $b_k=(n+k,m)\in Y_m$. It is easily checked that $\psi_{n,m}(a_l,b_k) \iff l<k$. Since all the $\psi_{n,m}$'s are uniformly definable with parameters, by compactness we get that the theory of $\Sh_2(\mu)$ is not stable.

(2) One can either see directly that the graph relation is stable, or note that since $\Sh_2(\mu)$ is an induced subgraph of the stable graph $\Sh_2^{sym}(\mu)$ its edge relation must also be stable.

(3) Suppose that $(a,b)\E(c,d)\E(e,f)\E(a,b)$. Note that also $(c,d)\E(a,b)\E(e,f)\E(c,d)$. Hence, without loss of generality $b=c$. It follows easily that  $e=d$. So $a<b<d<f$. Now either $a=f$ or $b=e=d$, so either way we get a contradiction.
\end{proof}

\begin{remark}\label{R:Sh3}
\begin{enumerate} 
\item Since $\Sh_2(\mu)$ is definable in $(\mu,<)$, it has NIP (not the Independence Property (IP), see e.g., \cite{guidetonip}). 
\item One can also note that $\Sh_3^{sym}(\mu)$ is triangle-free.
\end{enumerate}
\end{remark}

Every graph is a (not necessarily induced) subgraph of a stable graph (e.g. a large enough complete graph). On the other hand, every shift graph $\Sh_n(\mu)$ is an induced subgraph of a stable graph (the symmetric shift graph). This raises the following:

\begin{question}
Is every graph with a stable edge relation an induced subgraph of a stable graph?
\end{question}

\section{Infinite Cliques}\label{sec:infinite cliques}
In this section we prove the main results of the paper. 

%

\subsection{Simple Graphs}
We start with the following technical result.

\begin{lemma}\label{L:technical simple lemma}
Let $M$ be some structure, in a language $L$, and assume that $T=\mathrm{Th}(M)$ is simple. Let $M'$ be some expansion of $M$ to a language $L'\supseteq L$. Let $<$ be a linear order on the universe of $M$ and $\alpha,\beta\in M$ elements satisfying that
\begin{enumerate}
\item $(\dcl_{L'}(\alpha),<)$ and $(\dcl_{L'}(\beta),<)$ are well-orders and
\item  for any $\gamma\in \dcl_{L'}(\alpha)\cup \dcl_{L'}(\beta)$, $\gamma \forkindep[\dcl_{L'}(\gamma)\cap \{\varepsilon\in M:\varepsilon<\gamma\}]\{\varepsilon\in M:\varepsilon<\gamma\}$.
\end{enumerate}

Then  \[\dcl_{L'}(\alpha)\forkindep[\dcl_{L'}(\alpha)\cap \dcl_{L'}(\beta)]\dcl_{L'}(\beta).\]
\end{lemma}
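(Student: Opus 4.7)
The plan is to proceed by transfinite induction along a common well-ordering of $A := \dcl_{L'}(\alpha)$ and $B := \dcl_{L'}(\beta)$. First observe that $(A \cup B, <)$ is itself a well-order, being the union of two $<$-well-ordered sets under a common linear order, so one can enumerate $A \cup B$ in $<$-increasing order as $(e_i : i < \kappa)$. Set $C := A \cap B$ and $X_{\leq i} := X \cap \{e_k : k \leq i\}$ for $X \in \{A,B,C\}$. The main inductive claim to prove is
\[
  A_{\leq i} \forkindep[C_{\leq i}] B_{\leq i} \text{ for every } i < \kappa,
\]
from which $A \forkindep[C] B$ follows by finite character of non-forking (applied just like in the limit step below).

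For the successor step $i = j+1$, I will apply hypothesis (2) to $\gamma := e_i$. The key structural observation is that $\dcl_{L'}(e_i) \subseteq \dcl_{L'}(\alpha) = A$ whenever $e_i \in A$, by transitivity of $\dcl_{L'}$; hence $\dcl_{L'}(e_i) \cap \{\varepsilon \in M : \varepsilon < e_i\} \subseteq A_{\leq j}$, and this inclusion refines to $\subseteq C_{\leq j}$ when $e_i \in A \cap B$ (since then $\dcl_{L'}(e_i) \subseteq A \cap B = C$). Starting from the relation supplied by (2) and applying the rule $a \forkindep[B] CD \Rightarrow a \forkindep[BC] D$ (``base-to-right shift'' of non-forking) lets me raise the base of (2) to $A_{\leq j}$ when $e_i \in A \setminus B$, or all the way to $C_{\leq j}$ when $e_i \in A \cap B$; the case $e_i \in B \setminus A$ is handled symmetrically. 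Combining this with the inductive hypothesis $A_{\leq j} \forkindep[C_{\leq j}] B_{\leq j}$ via the standard transitivity/symmetry calculus of non-forking in simple theories then delivers $A_{\leq i} \forkindep[C_{\leq i}] B_{\leq i}$.

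The limit step is handled by finite character of non-forking together with the monotonicity of dividing in the base: a hypothetical forking witness for $A_{<i} \forkindep[C_{<i}] B_{<i}$ involves only finitely many parameters, which all sit inside $A_{\leq j} \cup B_{\leq j} \cup C_{\leq j}$ for some $j < i$; the inductive hypothesis at stage $j$ yields non-dividing over $C_{\leq j}$, and any sequence witnessing dividing over the larger base $C_{<i}$ is automatically a sequence witnessing dividing over $C_{\leq j}$, a contradiction.

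I expect the main obstacle to be the careful bookkeeping in the successor step — making sure that the base manipulations (transitivity, symmetry, and the base-to-right shift of non-forking) are combined correctly in each of the three subcases $e_i \in A \setminus B$, $e_i \in B \setminus A$, and $e_i \in A \cap B$, so that the consequence of (2) and the inductive hypothesis match up cleanly over the correct base.
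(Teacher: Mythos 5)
Your proof is correct and follows essentially the same route as the paper's: a transfinite induction along the well-order $\dcl_{L'}(\alpha)\cup\dcl_{L'}(\beta)$, with the successor step obtained from hypothesis (2) via base monotonicity (using $\dcl_{L'}(\gamma)\cap\{\varepsilon<\gamma\}\subseteq A_{\leq j}$, resp.\ $\subseteq C_{\leq j}$) and transitivity against the inductive hypothesis, and the limit step by finite character of forking. The only differences are cosmetic bookkeeping (ordinal enumeration versus indexing directly by elements, and raising the base straight to $C_{\leq j}$ in the $A\cap B$ case rather than to $A_{\leq j}$ first).
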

\begin{proof}
Let $\alpha,\beta\in M$ be elements as in the statement, and let $\mathbf{a}^\alpha=\dcl_{L'}(\alpha)$ and $\mathbf{a}^\beta=\dcl_{L'}(\beta)$.
Set $\Omega=\mathbf{a}^\alpha\cup\mathbf{a}^\beta$ and for any $\gamma\in \Omega$ let $\Omega_{<\gamma}=\{\varepsilon \in \Omega: \varepsilon<\gamma\}$. Since $(\Omega,<)$ is a finite union of well ordered sets, it is also well ordered. 

\begin{claim}\label{C:induction}
For $\gamma\in \Omega$, if \[\mathbf{a}^\alpha\cap \Omega_{<\gamma} \forkindep[\mathbf{a}^\alpha\cap \mathbf{a}^\beta\cap \Omega_{<\gamma}]\mathbf{a}^\beta\cap\ \Omega_{<\gamma}\]
then
\[\mathbf{a}^\alpha\cap \Omega_{\leq \gamma} \forkindep[\mathbf{a}^\alpha\cap \mathbf{a}^\beta\cap \Omega_{\leq \gamma}]\mathbf{a}^\beta\cap\ \Omega_{\leq \gamma}.\]
\end{claim}
\begin{claimproof}
By symmetry, we deal with the case $\gamma\in \mathbf{a}^\alpha$. We first prove that
\begin{equation}\label{eq:indep}
\mathbf{a}^\alpha\cap \Omega_{\leq \gamma}\forkindep[\mathbf{a}^\alpha\cap \mathbf{a}^\beta\cap \Omega_{<\gamma}] \mathbf{a}^\beta\cap \Omega_{<\gamma}.
\end{equation}

By hypothesis (2), $\gamma\forkindep[\dcl_{L'}(\gamma)\cap \{\varepsilon:\varepsilon<\gamma\}]\{\varepsilon:\varepsilon<\gamma\}$.
Since \[\dcl_{L'}(\gamma)\cap\{\varepsilon:\varepsilon<\gamma\}\subseteq \mathbf{a}^\alpha\cap \Omega_{<\gamma}\subseteq \{\varepsilon:\varepsilon<\gamma\},\] we get that $\gamma\forkindep[\mathbf{a}^\alpha\cap \Omega_{<\gamma}]\{\varepsilon:\varepsilon<\gamma\}$, so $\gamma\forkindep[\mathbf{a}^\alpha\cap \Omega_{<\gamma}] \mathbf{a}^\beta\cap \Omega_{<\gamma}$ and \[\mathbf{a}^\alpha\cap\Omega_{\leq \gamma}\forkindep[\mathbf{a}^\alpha\cap \Omega_{<\gamma}] \mathbf{a}^\beta\cap \Omega_{<\gamma}.\] By assumption and transitivity, we conclude (\ref{eq:indep}).

If $\gamma\notin \mathbf{a}^\beta$ then we are done; so assume that $\gamma\in \mathbf{a}^\beta$ as well. In this case, by properties of forking we get that $\mathbf{a}^\alpha\cap  \Omega_{\leq \gamma}\forkindep[\mathbf{a}^\alpha\cap \mathbf{a}^\beta\cap \Omega_{\leq \gamma}]\mathbf{a}^\beta\cap\Omega_{\leq \gamma}$, which is what we wanted to prove.
\end{claimproof}
The proof now follows by induction: the successor step is Claim \ref{C:induction} and limit case follows since forking is witnessed by a formula.
%
%
%
\end{proof}

We move to the main result of this section on simplicity.

\begin{proposition}\label{P:cannot embed infinite complete graph}
Let $T$ be a complete simple theory of graphs in the language of graphs $L=\{E\}$.

Let $\mu$ be an infinite cardinal and $G=(V,E)\vDash T$ with $|G|\leq 2^\mu$. If $\chi(G)\geq \mu^+$ then then there exists $G\equiv G'$ with $|G'|=\mu^+$ that contains a clique of cardinality $\mu^+$. 
\end{proposition}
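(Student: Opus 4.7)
I will argue contrapositively: assuming no $G'\equiv G$ of cardinality $\mu^+$ contains a $\mu^+$-clique, I derive $\chi(G)\leq\mu$. First a compactness reduction: if the monster of $T$ contained any infinite clique, then $T\cup\{E(x_i,x_j):i<j<\mu^+\}$ would be consistent and L\"owenheim--Skolem would produce a $G'\equiv G$ of cardinality $\mu^+$ with a $\mu^+$-clique. So I may assume $T$ has a finite uniform bound on clique sizes, equivalently that no model of $T$ contains an infinite clique.

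The next observation is the key \emph{generic-pair lemma}: under simplicity and no infinite clique, for any set $C$ and any $a,b$ with $\tp(a/C)=\tp(b/C)$ and $a\forkindep_{C}b$, we must have $\neg E(a,b)$. Indeed, extend $(a,b)$ to a Morley sequence in $\tp(a/C)$; being $C$-indiscernible, $E$ holds uniformly on its pairs, and if it held we would obtain an infinite clique, contradicting our assumption.

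Next I set up the hypotheses of Lemma~\ref{L:technical simple lemma}. Fix a Skolem expansion $L'\supseteq L$ with $|L'|=\aleph_{0}+|L|$, and choose an $L'$-Skolem-closed set $N\supseteq V$ in the monster with $|N|\leq 2^{\mu}$. By transfinite recursion of length $\leq 2^{\mu}$, I equip $N$ with a well-order $<$ such that for every $\gamma\in N$, $\gamma\forkindep_{\dcl_{L'}(\gamma)\cap\{\varepsilon<\gamma\}}\{\varepsilon<\gamma\}$: at each stage, the next element is chosen to realize a non-forking extension of its own type over the initial segment, relative to the already-placed part of its definable closure. The local character of simple forking (each type does not fork over a subset of size $\leq|T|$) is what makes the $2^{\mu}$ bound compatible with this construction. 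With the well-order in hand, Lemma~\ref{L:technical simple lemma} applies to any pair $v,w\in V$ and yields $\dcl_{L'}(v)\forkindep_{\dcl_{L'}(v)\cap\dcl_{L'}(w)}\dcl_{L'}(w)$, hence $v\forkindep_{\dcl_{L'}(v)\cap\dcl_{L'}(w)}w$.

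It remains to color $V$ by $\leq\mu$ colors. Combining the previous paragraph with the generic-pair lemma, any $E$-adjacent pair $v,w\in V$ must realize different types over $\dcl_{L'}(v)\cap\dcl_{L'}(w)$. I therefore color each vertex $v$ by an invariant of the countable pointed $L'$-structure $(\dcl_{L'}(v),v)$ (together, if needed, with auxiliary data recording how the order $<$ interacts with canonical bases inside $\dcl_{L'}(v)$), arranged so that two vertices with the same color necessarily have equal types over the intersection of their dcls and are thereby $E$-non-adjacent. Since this invariant lives in a set of size $\leq 2^{\aleph_{0}}$, a careful use of local character and the fact that $\mu$ is infinite keeps the number of colors at $\leq\mu$, contradicting $\chi(G)\geq\mu^{+}$.

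\emph{Main obstacle.} The heaviest step is the simultaneous construction of $N$ and $<$ realizing condition~(2) of Lemma~\ref{L:technical simple lemma}, which requires interleaving Skolem closures with non-forking choices in a single transfinite induction bounded by $2^{\mu}$. The coloring step is the other delicate point: it must extract from Lemma~\ref{L:technical simple lemma} and the generic-pair lemma a concrete invariant taking $\leq\mu$ values, rather than the naive $2^{\aleph_{0}}$ count of isomorphism types, and this is where the interaction between simplicity, the chosen well-order, and the combinatorial bookkeeping has to be calibrated most carefully.
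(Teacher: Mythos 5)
Your high-level skeleton (adjacent vertices that are independent with equal types over a common base would generate an infinite clique via the independence theorem, so a suitable $\mu$-coloring must exist) is the same as the paper's, but both of the steps you yourself flag as delicate contain genuine gaps, and they are exactly where the content of the proof lies. First, the well-order. Once the Skolem language $L'$ is fixed in advance, the set $\dcl_{L'}(\gamma)\cap\{\varepsilon<\gamma\}$ is determined by where $\gamma$ is placed, and there is no reason it should contain a non-forking base for $\tp(\gamma/\{\varepsilon<\gamma\})$: you cannot ``choose the next element to realize a non-forking extension,'' since the elements and their types are already given, and the recursion may simply get stuck with no remaining element satisfying $\gamma\forkindep[\dcl_{L'}(\gamma)\cap\{\varepsilon<\gamma\}]\{\varepsilon<\gamma\}$. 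The paper resolves this in the opposite order: it fixes the ordering first (the ordinal order on $V=2^\mu$), uses local character to extract a countable non-forking base $A_\alpha\subseteq\{\beta<\alpha\}$ for each $\alpha$, and only \emph{then} adds new unary function symbols $F_n$ interpreted so that $A_\alpha\subseteq\dcl_{L'}(\alpha)$. Without some such device, hypothesis (2) of Lemma \ref{L:technical simple lemma} is not available.

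Second, and more seriously, the coloring. The isomorphism type of the pointed structure $(\dcl_{L'}(v),v)$ carries no information about how $\dcl_{L'}(v)$ and $\dcl_{L'}(w)$ actually intersect inside the model, so two same-colored adjacent vertices need not have equal types over $\dcl_{L'}(v)\cap\dcl_{L'}(w)$: the canonical map $t_i(v)\mapsto t_i(w)$ need not fix that intersection pointwise. This is precisely what the paper's relations $R_{u,n}$ are for: fixing an injection $\alpha\mapsto\eta_\alpha$ of $V$ into ${}^{\mu}2$ (this is where $|G|\leq 2^\mu$ enters, and your argument never uses that hypothesis), the color of $\alpha$ records the traces $\eta_{t_j(\alpha)}\restriction u$ on finite $u\subseteq\mu$ separating the first $n$ terms, and an ultrapower of the resulting same-colored adjacent pairs $(\alpha,\beta_n)$ yields a single edge $\widetilde\alpha\mathrel{E}\widetilde\beta$ whose Skolem hulls meet ``diagonally'' (Claim \ref{C: enumerate me}); only then do Lemma \ref{L:technical simple lemma} and the independence theorem apply. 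Note also that there are $2^{\aleph_0}$ isomorphism types of countable pointed $L'$-structures, which need not be $\leq\mu$. Finally, your generic-pair lemma is false over an arbitrary set $C$ (take two realizations of the same type lying in different classes of a $\emptyset$-definable finite equivalence relation: they can be independent over $\emptyset$ yet begin no indiscernible sequence); the independence theorem needs the base to be a model, or agreement of Lascar strong types, which is why the paper verifies that $\mathbf{a}^0$ is an elementary substructure. In your intended application the base is Skolem-closed, so this last point is repairable, but the first two are not cosmetic.
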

\begin{proof}
By compactness and L\"owenheim-Skolem, it is sufficient to show that $G$ contains arbitrary large finite cliques. Furthermore, by passing to an elementary extension, we may assume that $|G|=2^\mu$. Additionally, after renaming elements, we may assume that $V=2^\mu$. Assume that $\chi(G)\geq \mu^+$.

As $T$ is simple, for every nonzero $\alpha\in V$,  $\tp(\alpha/\{\beta:\beta<\alpha\})$ does not fork over some nonempty countable subset $A_\alpha\subseteq \{\beta:\beta<\alpha\}$. Enumerate $A_\alpha$ as $\langle c_{\alpha,n}:n<\omega\rangle$, possibly with repetitions. Let $F_n$ be the function mapping a nonzero $\alpha\in V$ to $c_{\alpha,n}$ (and $F_n(0)=0$). 

Let $L' \supseteq L\cup \{F_n:n<\omega\}\cup \{<\}$ be a language containing Skolem functions and $G'$ an expansion of $G$ to $L'$ with Skolem functions such that the $F^n$'s are interpreted as above. Let $T'=\mathrm{Th}(G')$. As usual $\Sk(-)$ denotes the Skolem hull in $L'$, i.e. $\Sk(C)$ is the structure generated by $C$ in $G'$. Unless specified otherwise, whatever is done below is done in $L$.  

By forking base monotonicity and the choice of the functions $F_n$, for any $\alpha\in V$, $\tp(\alpha/\{\beta:\beta<\alpha\})$ does not fork over $\Sk(\{\alpha\})\cap \{\beta:\beta<\alpha\}$.
%

Let $\Delta$ be the collection of all of formulas in one variable $x$ over $\emptyset$ in $L'$ and let $\Delta=\bigcup_{l<\omega}\Delta_l$ be an increasing union of finite subsets. Let $\langle t_i(x): i<\omega\rangle$ be some enumeration of all the terms in $L'$, with $t_0(x)=x$. Thus $\Sk(\{ \alpha\})=\{t_i(\alpha):i<\omega\}$.

Enumerate the $2^\mu$ functions from $\mu$ to $\{0,1\}$ by  $\langle \eta_\alpha\rangle_{\alpha<2^\mu}$, without repetitions.

For any finite subset $u\subseteq \mu$ and $n<\omega$ we define a relation $R_{u,n}$ on 

\begin{equation*}
\begin{aligned}
\dom(R_{u,n}):=\{\alpha\in V=2^\mu: \eta_{t_i(\alpha)}\restriction u\neq \eta_{t_j(\alpha)}\restriction u \text{ for}\\ \text{all $i<j<n$ such that $t_i(\alpha)\neq t_j(\alpha)$}\}. 
\end{aligned}
\end{equation*}

Let $\alpha \R_{u,n} \beta$ (for $\alpha,\beta\in \dom(R_{u,n})$) if:
\begin{enumerate}
\item for all $j<n$, $\eta_{t_j(\alpha)}\restriction u=\eta_{t_j(\beta)}\restriction u$ and
\item $\tp_{\Delta_n}(\alpha)=\tp_{\Delta_n}(\beta)$.
\end{enumerate}
Note that $R_{u,n}$ is an equivalence relation on $\dom(R_{u,n})$.

\begin{claim}\label{C:exists alpha}
There exists $\alpha\in V$ such that for every finite subset $u\subseteq  \mu$ and $n<\omega$, if $\alpha\in \dom(R_{u,n})$ then 
\[\exists \beta\in [\alpha]_{R_{u,n}} (\alpha\E\beta).\]
\end{claim}
\begin{claimproof}
Note that each $R_{u,n}$ has only finitely many classes on $\dom(R_{u,n})$. Assume towards a contradiction that for every $\alpha\in V$ we can find some $u_\alpha\subseteq \mu$ and $n_\alpha<\omega$ that satisfy the negation of the statement. Now map $\alpha$ to $(u_\alpha,n_\alpha,[\alpha]_{R_{u_\alpha,n_\alpha}})$. This is easily a legal coloring of $V$ by $\mu$ colors, contradicting $\chi(G)\geq \mu^+$.
\end{claimproof}

Let $\alpha\in V$ be as supplied by Claim \ref{C:exists alpha}. For any $n<\omega$ let \[u_n=\{\min\{\varepsilon<\mu: \eta_{t_i(\alpha)}(\varepsilon)\neq \eta_{t_j(\alpha)}(\varepsilon)\}:i<j<n \text{ such that $t_i(\alpha)\neq t_j(\alpha)$}\};\] it is a finite subset of $\mu$. Easily, $\alpha\in \dom(R_{u_n,n})$ for all $n<\omega$. For $n<\omega$ let $\beta_n$ be the element given by Claim \ref{C:exists alpha} for $(u_n,n)$ (and $\alpha$).

Let  $\mathcal{U}$ be a nonprincipal ultrafilter on $\omega$ and let $\widetilde G=(G')^\omega/\mathcal{U}$ be the corresponding ultrapower in the language $L'$. We let $\widetilde G=(\widetilde V,E)$. Set $\widetilde \alpha=[\alpha]_{\mathcal{U}}$, $\widetilde \beta=[\beta_n]_{\mathcal{U}}$; so $\widetilde \alpha \E \widetilde \beta$. 
We make some observations. By definition of $\widetilde \alpha$, $\tp_{L'}(\widetilde \alpha)= \tp_{L'}(\alpha)$ and by definition of the relations $R_{u_n,n}$, this type is also equal to $\tp_{L'}(\widetilde \beta)$.

Set $\mathbf{a}^{\widetilde \alpha}=\langle t_i(\widetilde \alpha):i<\omega\rangle$ (and likewise for $\alpha$), $\mathbf{a}^{\widetilde \beta}=\langle t_i(\widetilde \beta):i<\omega\rangle$.

\begin{claim}\label{C: enumerate me}
$\mathbf{a}^{\widetilde \alpha}\cap \mathbf{a}^{\widetilde \beta}=\langle t_i(\widetilde \alpha):t_i(\widetilde \alpha)=t_i(\widetilde \beta)\rangle$.
\end{claim}
\begin{claimproof}
Note that for all $i<j<\omega$, if $t_i(\widetilde \alpha)=t_j(\widetilde \alpha)$ then the same holds for $\widetilde \beta$.

We claim that $t_i(\widetilde \alpha)=t_j(\widetilde \beta) \implies t_i(\widetilde \alpha)=t_i(\widetilde \beta)$. Otherwise, assume $i<j$. If $t_i(\widetilde \alpha)=t_j(\widetilde \alpha)$ then, by the first paragraph we are done. So assume not.  We can find some $n$ large enough for which $t_i(\beta_n)\neq t_j(\beta_n)$, $t_i(\alpha)=t_j(\beta_n)$ and $i< j<n$.

As $\beta_n\R_{u_n,n} \alpha$, $\eta_{t_i(\beta_n)}\restriction u_n=\eta_{t_i(\alpha)}\restriction u_n$ so by assumption $\eta_{t_i(\beta_n)}\restriction u_n=\eta_{t_j(\beta_n)}\restriction u_n$, contradicting the definition of $\dom(R_{u_n,n})$.
\end{claimproof}

As $(\mathbf{a}^\alpha,<)$ is a well-order (as a substructure of $2^\mu$) so are $(\mathbf{a}^{\widetilde \alpha},<)$ and $(\mathbf{a}^{\widetilde \beta},<)$. With the aim of applying Lemma \ref{L:technical simple lemma} with $M=\widetilde G$, we prove the following.

\begin{claim}\label{C:forking in tilde G}
For any  $\gamma\in \mathbf{a}^{\widetilde \alpha}\cup\mathbf{a}^{\widetilde \beta}$, $\tp_{L}(\gamma/\{\varepsilon\in \widetilde G :\varepsilon<\gamma\})$ does not fork over $\Sk(\gamma)\cap\{\varepsilon\in\widetilde G:\varepsilon<\gamma\}$.
\end{claim}
\begin{claimproof}
Assume that $\gamma\in \mathbf{a}^{\widetilde \alpha}$. The proof will only use the fact that $\tp_{L'}(\widetilde \alpha)=\tp_{L'}(\alpha)$. Hence, the same proof will also work for $\gamma\in \mathbf{a}^{\widetilde \beta}$. Recall that  $\tp_{L'}(\widetilde{\alpha})=\tp_{L'}(\widetilde \beta)$. Let $t(x)$ be a term (in $L'$) for which $\gamma=t(\widetilde \alpha)$, we get that for $\gamma':=t(\alpha)$, $\tp_{L'}(\gamma)=\tp_{L}(\gamma')$.
%

If $\tp_{L}(\gamma/\{\varepsilon :\varepsilon<\gamma\})$ forks over $\Sk(\gamma)\cap\{\varepsilon:\varepsilon<\gamma\}$ then by symmetry of forking there is a tuple $c$ of elements from $\{\varepsilon:\varepsilon <\gamma\}$ such that $\tp_L(c/\{\gamma\}\cup (\Sk(\gamma)\cap \{\varepsilon:\varepsilon<\gamma\}) )$ forks over $\Sk(\gamma)\cap \{\varepsilon:\varepsilon<\gamma\}$. Let $\varphi(y,z)$ be a formula over $\Sk(\gamma)\cap \{\varepsilon:\varepsilon<\gamma\}$, satisfied by $(c,\gamma)$, such that $\varphi(y,\gamma)$  forks over $\Sk(\gamma)\cap \{\varepsilon:\varepsilon<\gamma\}$. Since $\tp_{L'}(\gamma)=\tp_{L}(\gamma')$, $\varphi(y,\gamma')$ forks over $\Sk(\{\gamma'\})\cap \{\varepsilon:\varepsilon<\gamma'\}$.

On the other hand, we know that $\exists y<\gamma \varphi(y,\gamma)$ is in $\tp_{L'}(\gamma/\Sk(\gamma)\cap \{\varepsilon:\varepsilon<\gamma\})$ so $\exists y<\gamma' \varphi(y,\gamma')$ is in $\tp_{L'}(\gamma'/\Sk(\{\gamma'\})\cap \{\varepsilon:\varepsilon<\gamma'\})$. Consequently, there exists a tuple $c'$ of elements in $\{\varepsilon\in V:\varepsilon<\gamma'\}$ for which $\varphi(c',\gamma')$ holds, contradicting the fact that $\tp_L(\gamma'/\{\varepsilon:\varepsilon<\gamma'\})$ does not fork over $\Sk(\{\gamma'\})\cap \{\varepsilon:\varepsilon<\gamma'\}$ (by symmetry of non-forking).
\end{claimproof}

Recall that $\tp_{L'}(\widetilde \alpha)=\tp_{L'}(\widetilde \beta)$. Also, $\mathbf{a}^{\widetilde \alpha}$ and $\mathbf{a}^{\widetilde \beta}$ enumerate elementary substructures  of $\widetilde G\restriction L$. Setting $\mathbf{a}^0=\mathbf{a}^{\widetilde \alpha}\cap \mathbf{a}^{\widetilde\beta}=\langle t_i(\widetilde \alpha):t_i(\widetilde \alpha)=t_i(\widetilde \beta)\rangle$ by Claim \ref{C: enumerate me}, we have $\tp_L(\mathbf{a}^{\widetilde \alpha}/\mathbf{a}^0)=\tp_L(\mathbf{a}^{\widetilde \beta}/\mathbf{a}^0)$. Note that $\mathbf{a}^0$ is an elementary substructure as well.
%
%

By Claim \ref{C:forking in tilde G} and Lemma \ref{L:technical simple lemma}, $\mathbf{a}^{\widetilde \alpha}\forkindep[\mathbf{a}^0]\mathbf{a}^{\widetilde \beta}$. So by the independence theorem for simple theories, see  \cite[Lemma 7.4.8]{TZ}, we may find an indiscernible sequence starting with $\mathbf{a}^{\widetilde \alpha}$ and $\mathbf{a}^{\widetilde \beta}$ (in some elementary extension). Thus as $\widetilde \alpha\mathrel{E} \widetilde \beta$ we can find an infinite clique in that elementary extension.
\end{proof}

Given Proposition \ref{P:cannot embed infinite complete graph}, a natural question to ask if we can necessarily find an infinite clique already in $G$ it self. The following example shows that you cannot hope for an uncountable clique in general.

Recall that every random graph has a simple theory and that every such graph contains a countable infinite clique so these cannot be avoided.

\begin{proposition}\label{P:embed into random}
Let $\mu$ be an infinite cardinal. Any graph of cardinality $\mu$ without an uncountable clique can be embedded in a random graph of cardinality $\mu$ with no uncountable clique.
%
\end{proposition}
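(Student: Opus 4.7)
The plan is to embed $G$ into a graph $H = G \cup \bigcup_{n \geq 1} W_n$ of cardinality $\mu$, built in $\omega$ layers where each $W_n$ is an \emph{independent set} of witness vertices added to realize the extension axioms of the theory of the random graph. Independence of the layers will immediately force any clique of $H$ to meet each $W_n$ in at most one vertex, so the hypothesis on $G$ will bound any clique by $\aleph_0$.

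Set $H_0 := G$ and assume inductively that $H_n$ has been defined with $|H_n| = \mu$. For every pair $(A,B)$ of disjoint finite subsets of $H_n$ introduce a fresh vertex $v_{A,B}$ and declare its neighborhood in $H_{n+1} := H_n \cup W_{n+1}$ to be exactly $A$; in particular no edge is placed between distinct vertices of $W_{n+1}$, and no edge between $v_{A,B}$ and $H_n \setminus A$. Since $|H_n|$ is infinite there are at most $\mu$ such pairs, so $|W_{n+1}| \leq \mu$ and $|H_{n+1}| = \mu$; setting $H := \bigcup_n H_n$ then gives $|H| = \mu$. Because no edge is ever added inside $G$, it sits as an induced subgraph of $H$.

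Given disjoint finite $A, B \subseteq H$, they lie in some $H_n$, so $v_{A,B} \in W_{n+1}$ witnesses the corresponding extension axiom and $H$ is a model of the theory of the random graph. Each $W_n$ stays independent in all of $H$: the only edges incident to $v \in W_n$ are those declared when $v$ was created (with endpoints in $H_{n-1}$) or those declared when some later vertex $v' \in W_m$, $m > n$, was made adjacent to $v$, so two vertices of $W_n$ are never joined. Thus any clique $K \subseteq H$ satisfies $|K \cap W_n| \leq 1$ for every $n$, giving $|K \setminus G| \leq \aleph_0$; and $K \cap G$ is a clique of $G$, hence countable by hypothesis, so $|K| \leq \aleph_0$.

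The one genuine idea, and the only possible obstacle, is arranging that each new layer is internally independent. That is what prevents the witnessing vertices from amalgamating into an uncountable clique, despite the extension axioms forcing many of them to share neighborhoods with elements of earlier layers. Beyond this structural choice no delicate bookkeeping or cardinal arithmetic is required.
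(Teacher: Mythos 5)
Your construction is correct: you adjoin, in $\omega$ rounds, a fresh independent layer of witnesses for all extension axioms over the part built so far, and the two facts you isolate --- that each layer is independent in the final graph (so a clique meets it at most once) and that there are only countably many layers --- do bound every clique by $|K\cap G|+\aleph_0\leq\aleph_0$. The cardinality count and the verification of the extension axioms (using that $B$ lies in $H_n$ while all later edges at $v_{A,B}$ go to later layers) are also fine. The paper's proof has the same architecture (keep $G$ induced, add only extension-axiom witnesses, prevent the witnesses from amalgamating into an uncountable clique) but compresses the construction into a single transfinite pass: the vertex set is $\mu$ with $G$ placed on the even ordinals, the finite subsets of $\mu$ are enumerated cofinally often as $\langle u_\gamma\rangle_{\gamma<\mu}$, and the odd vertex $2\gamma+1$ is joined exactly to the elements of $u_\gamma$ below it. There the new vertices are \emph{not} pairwise non-adjacent (an odd vertex may later appear in some $u_{\gamma'}$), and the clique bound comes instead from each new vertex having only finitely many neighbours below it in the ordinal order: an uncountable clique would contain infinitely many odd vertices below some further odd member of the clique, which is impossible. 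Your layered version trades this "finite backward degree" argument for the cleaner structural fact that cliques are transversal to countably many independent sets; both yield the same conclusion with essentially the same amount of work.
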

\begin{proof}
Let $\mu$ be an infinite cardinal and let $G_0=(V_0,E_0)$ be the given graph. Without loss of generality, $V_0=\{2\alpha: \alpha<\mu\}$.


Let $\langle u_\gamma\rangle_{\gamma<\mu}$ enumerate all finite subsets of $\mu$ such that  each finite subset  occurs $\mu$ times. In particular, for any $\gamma<\alpha<\mu$ there is some $\alpha<\gamma'<\mu$ for which $u_\gamma=u_{\gamma'}$.

Let $V=\mu$. We define a new graph $G=(V,E)$, extending $G_0$, such that for $\alpha< \beta<\mu$ if $\beta=2\gamma+1$ for some $\gamma<\mu$ and $\alpha\in u_\gamma$ we let $\{\alpha,\beta\}$ be an edge.

We claim that $G$ has the desired properties. 

We note that $G$ is random graph. Indeed, let $X=\{\alpha_1,\dots,\alpha_n\},\,Y=\{\beta_1,\dots,\beta_n\}$ be two disjoint sets of vertices. Let $\gamma<\mu$ be  larger than the $\alpha_i$'s and $\beta_j$'s and satisfying that $u_\gamma=\{\alpha_1,\dots, \alpha_n\}$. Then $2\gamma+1$ is connected to each of the $\alpha_i$ and to none of the $\beta_j$.

Finally, assume that $G$ contains a clique $C$ of cardinality $\aleph_1$. By assumption, $C\cap V_0$ must be at most countable so there must a clique of cardinality $\aleph_1$ consisting of odd ordinals in $\mu$. Let $U$ be the first $\omega$ of those and let $2\gamma+1\in C$ be an ordinal larger than any of the ordinals in $U$. But $2\gamma+1$ can only be connected to finitely many vertices which are smaller, contradiction. 
\end{proof}

\begin{corollary}
For any infinite cardinal $\mu$ there exists a random graph of cardinality and chromatic number $\mu$ with no uncountable clique. 
\end{corollary}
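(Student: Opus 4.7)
The plan is to apply Proposition \ref{P:embed into random} to a witness graph with the chromatic number and clique properties already in hand. More precisely, it suffices to produce, for each infinite cardinal $\mu$, a graph $H$ with $|H|=\mu$, $\chi(H)\geq \mu$, and no uncountable clique. Once $H$ is in hand, Proposition \ref{P:embed into random} embeds $H$ into a random graph $G$ of cardinality $\mu$ with no uncountable clique, and since $H$ is a subgraph of $G$ we get
\[
\mu \leq \chi(H) \leq \chi(G) \leq |G| = \mu,
\]
whence $\chi(G)=\mu$, as desired.

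It remains only to exhibit such $H$. For $\mu=\aleph_0$ the complete graph $K_{\aleph_0}$ itself will do: it has cardinality and chromatic number $\aleph_0$, and being countable it trivially contains no uncountable clique. For uncountable $\mu$ one invokes the classical theorem of Erd\"os and Hajnal providing a triangle-free graph of cardinality and chromatic number $\mu$; such a graph obviously contains no uncountable (indeed, no size-three) clique.

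The only substantive ingredient is thus the uncountable Erd\"os--Hajnal construction; everything else is a direct application of Proposition \ref{P:embed into random} together with the trivial bound $\chi(G)\leq |G|$. It is worth noting that one cannot simply let the random graph produced in the proof of Proposition \ref{P:embed into random} do all the work starting from an empty $G_0$: in that construction every ``odd'' vertex has only finitely many smaller neighbors, so a greedy coloring along the ordering of $\mu$ already yields a coloring with $\aleph_0$ colors, and one really must feed in a high-chromatic subgraph to force $\chi(G)=\mu$.
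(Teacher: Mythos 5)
Your proof is correct and follows essentially the same route as the paper: feed a $\mu$-chromatic graph of size $\mu$ with no uncountable clique (the paper uses the Erd\H{o}s--Rado triangle-free construction uniformly for all infinite $\mu$, where you split off the countable case) into Proposition \ref{P:embed into random} and conclude via $\mu\leq\chi(G_0)\leq\chi(G)\leq|G|=\mu$. Your closing remark about why one cannot start from an empty $G_0$ is a nice sanity check but not needed.
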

\begin{proof}
Apply Proposition \ref{P:embed into random} to any graph $G_0$ of cardinality $\mu$ with no infinite clique and $\chi(G_0)=\mu$ (for example the triangle-free graph from \cite{ErRa-triangle-free}).

Let $G$ be the graph supplied by the proposition. Since $G_0$ embeds into $G$ and $|G|=\mu$ it follows that $\chi(G)=\mu$.
\end{proof}

\begin{question}
Is there a theory of simple graphs such that for every cardinal $\mu$ we can find a graph of cardinality and chromatic number $\mu$ with no infinite cliques at all?
\end{question}

\begin{question}
Does an analog of Proposition \ref{P:cannot embed infinite complete graph} hold  for other model theoretic tame graphs, such as NSOP$_1$ and NIP?
\end{question}

\subsection{Graphs with Stable Edge Relation}
Before getting into the main result we prove a technical lemma which may be interesting on its own.

\begin{lemma}\label{L:club of models}
Let $T$ be a first order theory, $\mu$ an infinite cardinal and $\varphi(x,y)$ a stable formula. Let $M\vDash T$ with $|M|=\mu^+$ and assume that $M$ is an increasing continuous union of elementary substructures $\langle M_\alpha\rangle_{\alpha<\mu^+}$ each of cardinality at most $\mu$.

Let $\psi(y,z)$ be a uniform definition of $\varphi$-types (as in Fact \ref{F:def of types}). For any $a\in M$ and $\alpha<\mu^+$, let $c_{a,\alpha}\in M_\alpha$ be such that $\psi(y,c_{a,\alpha})$ defines $\tp_\varphi(a/M_\alpha)$.

Then there exists a club $\mathcal{C}\subseteq \mu$ of limit ordinals satisfying that for any $\delta\in \mathcal{C}$ and $a\in M\setminus M_\delta$:

$(\dagger)_{a,\delta}$ For any $\delta<\beta<\mu^+$ there is $b\in M\setminus M_\beta$ for which $\tp_\varphi(b/M_\beta)$ is definable by $\psi(y,c_{a,\delta})$.
\end{lemma}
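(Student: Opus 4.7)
The plan is to take $\mathcal{C}$ to be a club of limit ordinals avoiding the set $S$ of ``bad'' limit ordinals $\delta$, where $\delta$ is bad if $(\dagger)_{a,\delta}$ fails for some $a \in M \setminus M_\delta$, i.e., there exist $a \in M \setminus M_\delta$ and $\beta \in (\delta, \mu^+)$ such that no $b \in M \setminus M_\beta$ has $\tp_\varphi(b/M_\beta)$ defined by $\psi(y, c_{a,\delta})$. I would establish that $S$ is non-stationary by a Fodor-plus-pigeonhole argument on the defining parameters, after which $\mathcal{C}$ can be chosen as any club of limit ordinals contained in the complement of $S$.

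Suppose, toward a contradiction, that $S$ is stationary. For each $\delta \in S$ fix witnesses $a_\delta$, $\beta_\delta$ and set $c_\delta := c_{a_\delta, \delta}\in M_\delta$. By the continuity of the filtration $(M_\alpha)_{\alpha<\mu^+}$, each limit $\delta$ satisfies $c_\delta \in M_\gamma$ for some $\gamma < \delta$, so the map $f$ sending $\delta$ to the least such $\gamma$ is regressive on $S$. Fodor's lemma then produces a stationary $S' \subseteq S$ and a fixed $\gamma^* < \mu^+$ with $c_\delta \in M_{\gamma^*}$ for every $\delta \in S'$. Since $|M_{\gamma^*}| \leq \mu < \cf(\mu^+)$, partitioning $S'$ according to the value of $c_\delta$ yields a stationary piece $S'' \subseteq S'$ on which $c_\delta$ is constantly equal to some $c^* \in M_{\gamma^*}$.

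To derive the contradiction, fix any $\delta \in S''$ and then pick $\delta' \in S''$ with $\delta' > \beta_\delta$; such a $\delta'$ exists since $S''$, being stationary in $\mu^+$, is unbounded. Now $a_{\delta'} \in M \setminus M_{\delta'} \subseteq M \setminus M_{\beta_\delta}$, and because $c_{a_{\delta'}, \delta'} = c^*$ the type $\tp_\varphi(a_{\delta'}/M_{\delta'})$ is defined by $\psi(y, c^*)$. Restricting to $M_{\beta_\delta} \subseteq M_{\delta'}$, the type $\tp_\varphi(a_{\delta'}/M_{\beta_\delta})$ is also defined by $\psi(y, c^*) = \psi(y, c_{a_\delta, \delta})$, directly contradicting the choice of $\beta_\delta$ as a witness that $(\dagger)_{a_\delta, \delta}$ fails.

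The main subtlety is maintaining \emph{stationarity} rather than mere unboundedness through both reduction steps, so that once $\delta \in S''$ is fixed, the stationary set $S''$ still meets the tail $(\beta_\delta, \mu^+)$ and allows the choice of $\delta'$. The two ingredients — Fodor applied to the regressive $f$ and the stationary pigeonhole on the $\leq \mu$ possible values of $c_\delta$ inside $M_{\gamma^*}$ — are standard, and the rest of the argument is the trivial observation that a $\varphi$-type defined by an instance $\psi(y,c^*)$ with $c^* \in M_{\beta_\delta}$ restricts to one defined by the same instance.
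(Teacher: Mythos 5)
Your proof is correct and uses the same core argument as the paper: Fodor's lemma applied to the regressive map sending a bad $\delta$ to the least $\gamma$ with $c_{a_\delta,\delta}\in M_\gamma$, a pigeonhole on the $\leq\mu$ possible parameter values inside $M_{\gamma^*}$, and then a later member of the homogeneous set whose restricted $\varphi$-type contradicts the badness witness $\beta_\delta$. The only difference is organizational: the paper first shows that good limit ordinals exist above every $\alpha$ and then closes off under the resulting function to produce the club, whereas you show directly that the set of bad limit ordinals is non-stationary and take a club of limit ordinals in its complement; this is slightly more direct, since it establishes $(\dagger)_{a,\delta}$ for every good $\delta$ with its own parameter $c_{a,\delta}$ and avoids the final transfer step in the paper's construction.
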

\begin{proof}
We first prove that  for any $\alpha<\mu^+$, $(\dagger')_\alpha$ there is some limit ordinal $\delta>\alpha$ such $(\dagger)_{a,\delta}$ holds for any $a\in M\setminus M_\delta$. Assume otherwise, i.e. there exists an $\alpha<\mu^+$ for which for any limit ordinal $\delta>\alpha$ there is some $a_\delta\in M\setminus M_\delta$ and $\beta_\delta>\delta$ such that for any $b\in M\setminus M_{\beta_\delta}$, $\tp_\varphi(b/M_{\beta_\delta})$ is not defined by $\psi(y,c_{a_\delta,\delta})$.

For any limit ordinal $\delta>\alpha$, let $f(\delta)$ be the minimal ordinal $\varepsilon$ for which $c_{a_\delta,\delta}\in M_\varepsilon$. Note that as $c_{a_\delta,\delta}$ is a finite tuple and $\delta$ is a limit ordinal, necessarily $f(\delta)<\delta$. As the set of limit ordinals between $\alpha$ and $\mu^+$ is a stationary subset of $\mu^+$ (it is even a club), by Fodor's lemma (\cite[Theorem 8.7]{jech}) there exists a stationary subset $S\subseteq \mu^+$ and $\varepsilon<\mu^+$ for which $f(\delta)=\varepsilon$ for any $\delta\in S$.

By definition, $c_{a_\delta,\delta}\in M_\varepsilon$ for any $ \delta\in S$. As $|M_\varepsilon|\leq \mu$, by the pigeonhole principle there is an unbounded subset $S'\subseteq S$ (i.e. of cardinality $\mu^+$) for which $c:=c_{a_{\delta_1},\delta_1}=c_{a_{\delta_2},\delta_2}$ for any $\delta_1,\delta_2\in S'$.

Now, pick any $\delta\in S'$. By our assumption there is some $\beta_\delta>\delta$ such that for any $b\in M\setminus M_{\beta_\delta}$, $\tp_\varphi(b/M_{\beta_\delta})$ is not defined by $\psi(y,c_{a_\delta,\delta})=\psi(y,c)$.

Let $\beta'>\beta_\delta$ be an element in $S'$ (it is unbounded) and let $a_{\beta'}$ be the corresponding element. So $a_{\beta'}\notin M_{\beta'}$ and in particular $\notin  M_{\beta_\delta}$ and thus $\tp_\varphi(a_{\beta'}/M_{\beta_\delta})$ is not defined by $\psi(y,c_{a_\delta,\delta})=\psi(y,c)$. On the other hand, by choice of $S'$, $\tp_\varphi(a_{\beta'}/M_{\beta'})$ and thus also $\tp_\varphi(a_{\beta'}/M_\beta)$ is defined by $\psi(y,c)$, contradiction. 

Now, we turn to show that we can find such a club $\mathcal{C}$. Let $g:\mu^+\to \mu^+$ be the function mapping $\alpha$ to the minimal limit ordinal $\delta$ satisfying $(\dagger')_\alpha$ and let $\mathcal{C}=\{\alpha<\delta<\mu^+: \text{ $\delta$ is a limit ordinal and } \forall(\alpha<\delta) g(\alpha)<\delta\}$; it is a club in $\mu^+$.

Let $\delta\in \mathcal{C}$ and $a\in M\setminus M_\delta$. As $\delta$ is a limit ordinal, $c_{a,\delta}\subseteq M_\varepsilon$ for some $\varepsilon<\delta$; so $g(\varepsilon)<\delta$. Let $\delta<\beta<\mu^+$; we need to show that there exists $b\in M\setminus M_\beta$ for which $\tp_\varphi(b/M_\beta)$ is defined by $\psi(y,c_{a,\delta})$.

By $(\dagger)_{a,g(\varepsilon)}$, since $a\notin M_{g(\varepsilon)}$ and $\beta>\delta>g(\varepsilon)$, there is $b\in M\setminus M_\beta$ for which  $\tp_\varphi(b/M_\beta)$ is defined by $\psi(y,c_{a,\delta})$, as required.
\end{proof}

We phrase forking symmetry for a stable formula in a form useful to us.
\begin{fact}[Harrington]\label{F:local Harrington}
Let $T$ be a first order theory with monster model $\mathbb{U}$. Let $\varphi(x,y)$ be a stable formula and $\psi(y,z)$ a formula uniformly defining $\varphi$-types. For any two small models $N_1$ and $N_2$ and elements $a,b\in \mathbb{U}$, if $\psi(y,c_a)$ defines $\tp_\varphi (a/N_1)$ and $\psi(y,c_b)$ defines $\tp_\varphi(b/N_2)$ and $c_a\in N_2,\, c_b\in N_1$ then 
\[\psi(a,c_b)\Leftrightarrow \psi(b,c_a).\]
\end{fact}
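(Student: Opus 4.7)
The plan is to reduce Harrington's symmetry to the uniqueness of non-forking extensions of $\varphi$-types and the symmetry of non-forking independence in stable theories. First, by Fact~\ref{F:def of types} and stability of $\varphi$, the formula $\psi(y,c_a)$ extends $\tp_\varphi(a/N_1)$ to a unique global non-forking $\varphi$-type $p^*$ over $\mathbb{U}$, characterised by $\varphi(x,m)\in p^* \Leftrightarrow \models\psi(m,c_a)$ for all $m\in\mathbb{U}$. Similarly, $\psi(y,c_b)$ extends $\tp_\varphi(b/N_2)$ to the unique global non-forking $\varphi$-type $q^*$.

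Next, I would choose compatible realizations. Passing to a sufficiently saturated extension of $\mathbb{U}$ if needed, take $a^*$ realizing the full non-forking extension $\widetilde{p}^*$ of $\tp(a/N_1)$ over $N_1\cup N_2\cup\{b\}$; by uniqueness of the global non-forking extension of a complete type over a model we get $\tp(a^*/N_1)=\tp(a/N_1)$, and since $c_b\in N_1$ this forces $\models\psi(a^*,c_b)\Leftrightarrow\models\psi(a,c_b)$. Next, take $b^*$ realizing the full non-forking extension $\widetilde{q}^*$ of $\tp(b/N_2)$ over $N_1\cup N_2\cup\{a,a^*\}$; similarly $\tp(b^*/N_2)=\tp(b/N_2)$ and, using $c_a\in N_2$, $\models\psi(b^*,c_a)\Leftrightarrow\models\psi(b,c_a)$. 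Forking transitivity and symmetry give $a^*\forkindep_{N_1N_2}b^*$. Unpacking the defining properties of $p^*$ and $q^*$ at $b^*$ and $a^*$ respectively, the two sides of the target equivalence translate to $\psi(b,c_a)\Leftrightarrow\varphi(a^*,b^*)$ and $\psi(a,c_b)\Leftrightarrow\varphi(b^*,a^*)$.

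The heart of the argument, and the place I anticipate the most work, is to establish the equivalence $\varphi(a^*,b^*)\Leftrightarrow\varphi(b^*,a^*)$ in this symmetric non-forking configuration. I would argue by contradiction: supposing $\psi(b,c_a)\land\neg\psi(a,c_b)$ (the other case being symmetric), iterate the construction to obtain a Morley-like sequence $\langle(a_i,b_i):i<\omega\rangle$ of pairs, each conjugate to $(a^*,b^*)$ over $N_1 N_2$ and pairwise independent in the appropriate sense. Using the defining properties of $p^*$ and $q^*$ together with $\tp(a_i/N_1)=\tp(a/N_1)$ and $\tp(b_i/N_2)=\tp(b/N_2)$, one computes $\varphi(a_i,b_j)$ and $\varphi(b_j,a_i)$ explicitly for $i<j$, $i=j$, $i>j$ and observes that under the hypothesis the resulting $\varphi$-pattern is not constant along the ``diagonals''. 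Invoking the ``moreover'' clause of Fact~\ref{F:def of types}, which rewrites $\psi(y,c)$ as a Boolean combination of instances of $\varphi^{\mathrm{opp}}$, one assembles from these computations an explicit Boolean combination of $\varphi$-instances witnessing the order property on $\langle(a_i,b_i):i<\omega\rangle$, contradicting stability of $\varphi$. This yields the desired equivalence and hence the conclusion $\psi(a,c_b)\Leftrightarrow\psi(b,c_a)$.
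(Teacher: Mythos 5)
The paper disposes of this Fact in two lines: the global $\varphi$-types $\widetilde p$ and $\widetilde q$ defined by $\psi(y,c_a)$ and $\psi(y,c_b)$ satisfy $\psi(x,c_b)\in\widetilde p\Leftrightarrow\psi(x,c_a)\in\widetilde q$ by Harrington's symmetry lemma for definable local types (\cite[Lemma 8.3.4]{TZ}), and since $c_b\in N_1$ and $c_a\in N_2$ these memberships are already decided by $a$ and $b$. Your plan amounts to reproving that cited lemma, which would be fine, but there is a genuine gap in the tools you invoke: ``uniqueness of the global non-forking extension of a complete type over a model'' and ``forking transitivity and symmetry give $a^*\forkindep[N_1N_2]b^*$'' are facts about \emph{stable (or simple) theories}, whereas the Fact assumes only that the single formula $\varphi$ is stable and is applied (in Proposition \ref{P:stable}) to theories that are explicitly not assumed simple. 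In such a theory a complete type over a model can have many non-forking extensions and forking symmetry and transitivity fail, so $a^*$ and $b^*$ cannot be produced as you describe. The argument has to be run purely locally, with $\varphi$- and $\varphi^{\text{opp}}$-types and their unique definable extensions over models --- which is exactly what the cited lemma packages.

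The second, more structural problem is the translation step. From the construction one gets $\psi(b,c_a)\Leftrightarrow\varphi(a^*,b)$ (because $a^*$ is generic over $b$) and $\psi(a,c_b)\Leftrightarrow\varphi(b^*,a^*)$ (because $b^*$ is generic over $a^*$); but your claimed equivalence $\psi(b,c_a)\Leftrightarrow\varphi(a^*,b^*)$ requires $a^*$, which was realized \emph{before} $b^*$ and is only generic over $N_1N_2b$, to remain generic over $b^*$. Whether the two orders of realization give the same truth value is precisely the content of Harrington's lemma, so the statement you are trying to prove has been smuggled into an unproved ``unpacking'' step. This matters because, once that translation is granted, the remaining ``heart'' $\varphi(a^*,b^*)\Leftrightarrow\varphi(b^*,a^*)$ is vacuous for the symmetric formula $E$ to which the Fact is applied, so your order-property argument would be carrying no load; and for non-symmetric $\varphi$ the translation additionally needs the $\varphi^{\text{opp}}$-definition of $\tp_{\varphi^{\text{opp}}}(b/N_2)$, which is not the given $\psi(y,c_b)$. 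The correct target for the Morley-sequence/order-property computation is the commutation of the two definitions themselves (as in \cite[Lemma 8.3.4]{TZ}), and that computation is the part you leave as a sketch.
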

\begin{proof}
Let $p=\tp_\varphi(a/N_1)$ and $q=\tp_\varphi(b/N_2)$.
Let $\widetilde p\supseteq p$ be the global $\varphi$-type that $\psi(y,c_a)$ defines and $\widetilde q\supseteq q$ be the global $\varphi$-type that $\psi(y,c_b)$ defines. By \cite[Lemma 8.3.4]{TZ}, $\psi(x,c_b)\in \widetilde p\Leftrightarrow \psi(x,c_a)\in \widetilde q$; but as $c_a\in N_2$ and $c_b\in N_1$, we conclude.
\end{proof}

The following is due to Engelking-Kar\l owicz \cite{En-Ka}, see also \cite{Rinot}.
\begin{fact}\label{F:En-Ka}
For cardinals $\kappa\leq \lambda\leq \mu\leq 2^\lambda$ the following are equivalent:
\begin{enumerate}
\item $\lambda^{<\kappa}=\lambda$
\item there exists a collection of functions $\langle f_i:\mu\to \lambda\rangle_{i<\lambda}$, such that for every $X\in [\mu]^{<\kappa}$ and every function $f:X\to \lambda$, there exists some $i<\lambda$ with $f\subseteq f_i$.
\end{enumerate}
\end{fact}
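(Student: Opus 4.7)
\textbf{Proof plan for Fact \ref{F:En-Ka}.} The direction (2)$\Rightarrow$(1) is easy cardinal arithmetic. For every ordinal $\delta<\kappa$, the set $\delta$ is contained in $\lambda\subseteq\mu$, so any function $f:\delta\to\lambda$ may be regarded as a partial function on $\mu$ of size $<\kappa$; by hypothesis, $f\subseteq f_i$ for some $i<\lambda$, so $f=f_i\restriction\delta$. The map $i\mapsto f_i\restriction\delta$ is therefore surjective onto $\lambda^\delta$, giving $\lambda^\delta\leq\lambda$. Taking suprema over $\delta<\kappa$, $\lambda^{<\kappa}\leq\lambda$, while $\lambda^{<\kappa}\geq\lambda^1=\lambda$. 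Hence equality.

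For the nontrivial direction (1)$\Rightarrow$(2), the plan is the classical Engelking--Kar\l owicz coding. Since $\mu\leq 2^\lambda$, fix an injection $\mu\hookrightarrow {}^\lambda 2$ and identify each $\alpha<\mu$ with its characteristic function $\alpha:\lambda\to 2$. I would index the desired family by triples $(Y,Z,g)$ where $Y\in[\lambda]^{<\kappa}$, $Z\subseteq {}^Y 2$ with $|Z|<\kappa$, and $g:Z\to\lambda$. For each such triple, define
\[
  f_{(Y,Z,g)}(\alpha) = \begin{cases} g(\alpha\restriction Y) & \text{if } \alpha\restriction Y\in Z,\\ 0 & \text{otherwise.}\end{cases}
\]

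To see that there are only $\lambda$ such triples, note first that $\lambda^{<\kappa}=\lambda$ forces $2^\delta\leq\lambda^\delta\leq\lambda$ for every $\delta<\kappa$, so $|{}^Y 2|\leq\lambda$ whenever $|Y|<\kappa$. Therefore the number of choices for $Y$ is $|[\lambda]^{<\kappa}|=\lambda^{<\kappa}=\lambda$, the number of choices for $Z\subseteq{}^Y 2$ with $|Z|<\kappa$ is at most $\lambda^{<\kappa}=\lambda$, and the number of $g:Z\to\lambda$ is at most $\lambda^{<\kappa}=\lambda$. In total, $\lambda\cdot\lambda\cdot\lambda=\lambda$ triples.

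To verify the covering property, let $X\in[\mu]^{<\kappa}$ and $f:X\to\lambda$. For every pair $\alpha\neq\beta$ in $X$, choose a coordinate $\gamma(\alpha,\beta)<\lambda$ where $\alpha$ and $\beta$ differ (such a coordinate exists because our injection makes $\alpha,\beta$ distinct functions on $\lambda$). Set $Y=\{\gamma(\alpha,\beta):\alpha\neq\beta\text{ in }X\}\subseteq\lambda$; then $|Y|<\kappa$ by the same mild arithmetic used above, and by construction the restrictions $\alpha\restriction Y$ for $\alpha\in X$ are pairwise distinct. Put $Z=\{\alpha\restriction Y:\alpha\in X\}$ and $g(\alpha\restriction Y)=f(\alpha)$. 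Then $f_{(Y,Z,g)}\supseteq f$, as required.

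The only real point requiring care is the cardinal bookkeeping in the middle paragraph, specifically deducing $2^\delta\leq\lambda$ for every $\delta<\kappa$ from $\lambda^{<\kappa}=\lambda$ so that the auxiliary spaces ${}^Y 2$ stay of size $\leq\lambda$; once this is in hand, the counting and the covering step are routine.
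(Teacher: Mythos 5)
Your argument is correct: (2)$\Rightarrow$(1) by restricting the $f_i$ to initial segments $\delta<\kappa$, and (1)$\Rightarrow$(2) via the standard Engelking--Kar\l owicz coding through ${}^\lambda 2$, with the cardinal bookkeeping ($2^{|Y|}\leq\lambda$, $|[\lambda]^{<\kappa}|=\lambda^{<\kappa}=\lambda$, separation of the points of $X$ by a set $Y$ of $<\kappa$ coordinates) all in order for infinite $\kappa$, which is the only case at issue. The paper states this as a known fact with citations to Engelking--Kar\l owicz and does not reprove it, so there is no in-paper argument to compare against; what you have written is precisely the classical proof from those sources.
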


We now prove the main result of this section.

\begin{proposition}\label{P:stable}
Let $L=\{E\}$ be the language of graphs and $T$ be an $L$-theory specifying that $E$ is a symmetric and irreflexive stable relation. For any infinite cardinal $\mu$ and $G\vDash T$ with $|G|=\mu^+$, if $\chi(G)\geq \mu^+$ then $G$ contains an infinite clique of cardinality $\mu^+$.
\end{proposition}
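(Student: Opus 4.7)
\noindent The plan is to build a clique of cardinality $\mu^+$ directly inside $G$ by a transfinite recursion producing a sequence of vertices with a common $\varphi$-type definition, using the chromatic hypothesis to locate a suitable starting point. First I would write $G = \bigcup_{\alpha<\mu^+} M_\alpha$ as an increasing continuous elementary chain of submodels with $|M_\alpha|\leq \mu$. Setting $\varphi(x,y):=xEy$ (stable by hypothesis), invoke Fact~\ref{F:def of types} for a uniform definition $\psi(y,z)$ of $\varphi$-types; fix $c_{a,\alpha}\in M_\alpha$ with $\psi(y,c_{a,\alpha})$ defining $\tp_\varphi(a/M_\alpha)$, and apply Lemma~\ref{L:club of models} to produce a club $\mathcal{C}\subseteq \mu^+$ satisfying property $(\dagger)$.

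\noindent The main observation is that for $\delta\in\mathcal{C}$ and $c=c_{a,\delta}$ with $a\in G\setminus M_\delta$, the moreover clause of Fact~\ref{F:def of types} expresses $\psi(y,c)$ as a Boolean combination of instances $\varphi(y,e_i)^{\epsilon_i}$ with parameters $e_i\in M_\delta$; substituting $y=a$ and using that $\tp_\varphi(a/M_\delta)$ is itself defined by $\psi(y,c)$, the truth value of $\psi(a,c)$ depends only on $\tp_\varphi(a/M_\delta)$, yielding a fixed truth value $T(c)$ independent of the witness $a$. Given such a pair $(a_*,\delta_0)$ with $T(c_*)$ true, writing $c_* := c_{a_*,\delta_0}$, I build $(a_\xi)_{\xi<\mu^+}$ by transfinite recursion: at stage $\xi$ pick $\beta_\xi\in\mathcal{C}$ above $\delta_0$ and above all prior $\beta_\eta$, with $\{a_\eta:\eta<\xi\}\subseteq M_{\beta_\xi}$, and use $(\dagger)_{a_*,\delta_0}$ to find $a_\xi\in G\setminus M_{\beta_\xi}$ with $\tp_\varphi(a_\xi/M_{\beta_\xi})$ defined by $\psi(y,c_*)$. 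For $\eta<\xi$, the inclusion $a_\eta\in M_{\beta_\xi}$ gives $a_\xi E a_\eta \iff \psi(a_\eta, c_*) = T(c_*) =$ true, so $\{a_\xi:\xi<\mu^+\}$ is the desired clique.

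\noindent The main obstacle is producing a pair $(a_*,\delta_0)$ with $T(c_*)$ true, which is where the hypothesis $\chi(G)\geq\mu^+$ must enter. The strategy is to argue contrapositively: if $T(c_{a,\delta})$ were false for all applicable $(a,\delta)$, then I would color $G$ with at most $\mu$ colors, contradicting the hypothesis via Fact~\ref{F:basic-prop-chi}. Fixing $\delta_0\in\mathcal{C}$ and coloring $a\in G\setminus M_{\delta_0}$ by $c_{a,\delta_0}\in M_{\delta_0}$ works whenever all relevant $\varphi$-type extensions are non-forking: for a monochromatic pair $(a,b)$ with $c:=c_{a,\delta_0}=c_{b,\delta_0}$, Harrington's symmetry (Fact~\ref{F:local Harrington}) yields $aEb\iff\psi(a,c)=T(c)=$ false. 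The real difficulty is pairs whose types fork over $M_{\delta_0}$; here Fact~\ref{F:En-Ka} is the intended tool, used to simultaneously encode the sequences $(c_{a,\delta})_{\delta\in\mathcal{C}}$ across finite subsets of $\mathcal{C}$ via $\mu$ template functions, thereby refining the coloring to absorb all forking-induced monochromatic edges while keeping the total palette of size $\mu$.
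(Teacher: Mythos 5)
Your endgame is right and matches the paper: once you have a pair $(a_*,\delta_0)$ with $\psi(a_*,c_*)$ true for $c_*=c_{a_*,\delta_0}$, the transfinite recursion via $(\dagger)$, together with the observation that $\psi(y,c_*)$ is a Boolean combination of instances of $\varphi^{\mathrm{opp}}$ over $M_{\delta_0}$ (so that its truth value at any witness is determined by the $\varphi$-type that $\psi(y,c_*)$ itself defines), does produce a clique of size $\mu^+$. The genuine gap is in producing that seed pair, which is where essentially all the work of the paper's proof lies. Your proposed coloring $a\mapsto c_{a,\delta_0}$ for a single fixed $\delta_0$ does not work, and Fact~\ref{F:local Harrington} does not rescue it: with $N_1=N_2=M_{\delta_0}$ and $c_a=c_b=c$ it only yields the tautology $\psi(a,c)\iff\psi(b,c)$, and says nothing about whether $aEb$, because $b\notin M_{\delta_0}$ and so $\varphi(x,b)$ is not decided by $\tp_\varphi(a/M_{\delta_0})$. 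A monochromatic edge between two vertices both living far above $M_{\delta_0}$ (a ``forking pair'') is exactly the case you cannot exclude, and your final sentence --- using Engelking--Kar{\l}owicz to ``encode the sequences $(c_{a,\delta})_{\delta\in\mathcal{C}}$ across finite subsets of $\mathcal{C}$'' --- is not an argument: those sequences are indexed by a club of size $\mu^+$, and recording them wholesale would blow the palette past $\mu$.

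The missing idea is the finite descending chain of canonical parameters: set $\alpha^a_0$ minimal with $a\in G_{\alpha^a_0+1}$, let $c^a_0=c_{a,\alpha^a_0}$ define $\tp_\varphi(a/G_{\alpha^a_0})$, then let $\alpha^a_1<\alpha^a_0$ be the level where $c^a_0$ first appears, take $c^a_1=c_{a,\alpha^a_1}$, and iterate; well-foundedness of the ordinals forces this to terminate at level $0$ after finitely many steps $n_a$. Engelking--Kar{\l}owicz is then applied to the \emph{finite} function $\alpha^a_i\mapsto i$ (so that a monochromatic pair has compatibly interleaved chains), and the colour also records $n_a$ and the positions of $a$ and of each $c^a_{i-1}$ inside $G_{\alpha^a_i+1}$. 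For a monochromatic edge $aEb$ one then runs a downward induction on the interleaving pattern (the ``pivots''), applying Fact~\ref{F:local Harrington} at each crossing of the two chains, with base case the level $\alpha^b_0$ where $b$ itself enters the chain of models; this propagates the edge down to level $n$ where the colouring forces $c^a_{n-1}=c^b_{n-1}=:c$ and yields $\psi(a,c)$ or $\psi(b,c)$. Without this finiteness-and-pivot mechanism your contrapositive colouring argument does not close, so the proposal as written does not prove the proposition.
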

\begin{proof}
For ease of notation, write $\varphi(x,y)=E(x,y)$ and let $\psi(y,z)$ be a uniform definition for $\varphi$-types (as in Fact \ref{F:def of types}). Assume that $\chi(G)\geq \mu^+$.

By Lemma \ref{L:club of models}, there exists an increasing  continuous family of elementary substructures $\{G_\alpha\prec G:0<\alpha<\mu^+\}$   with $|G_\alpha|=\mu$ and $\bigcup_{0<\alpha<\mu^+} G_\alpha=G$, satisfying:

$(\dagger)$ For any $0<\delta<\mu^+$ and $a\in G\setminus G_\delta$  there exists $c_{a,\delta}\in G_\delta$ such that $\psi(y,c_{a,\delta})$ defines $\tp_\varphi(a/G_\delta)$ satisfying that  for any $\delta<\beta<\mu^+$ there is $b\notin G_\beta$ for which $\tp_\varphi(b/G_\beta)$ is defined by $\psi(y,c_{a,\delta})$. 

Set $G_0=\emptyset$

For any $a\in G$ let $\alpha^a_0<\mu^+$ be minimal such that $a\in G_{\alpha_0+1}$. Let $c_0^a=c_{a,\alpha_0^a}\in G_{\alpha^a_0}$, in particular $\psi(y,c_0^a)$ defines $\tp_\varphi (a/G_{\alpha^a_0})$. Let $\alpha_1^a<\alpha_0^a$ be such that $c_0^a\in G_{\alpha_1^a+1}\setminus G_{\alpha^a_1}$. Let $c^a_1=c_{a,\alpha_1^a}$. Likewise we continue and find a sequence $\langle (c_{k-1}^a,\alpha_k^a):1\leq k\leq n_a\rangle$ satisfying
\begin{list}{•}{}
\item $c^a_{k-1}=c_{a,\alpha^a_{k-1}}$, in particular $\psi(y,c^a_{k-1})$ defines $\tp_\varphi(a/G_{\alpha^a_{k-1}})$
\item $c^a_{k-1}\in G_{\alpha^a_k+1}\setminus G_{\alpha^a_k}$. 
\item $\alpha^a_{n_a}=0$.
\end{list}

Note that $\alpha_0=0$ if and only if $n_a=0$ and then there are no $c$'s.

We now define a coloring.

By  Engelking-Kar\l owicz (Fact \ref{F:En-Ka}), there exists a family of functions $\{g_{\beta}:\mu^+\to \mu:\beta<\mu\}$ satisfying that for any finite subset $X\subseteq \mu^+$ and every function $f:X\to \mu$ there is some function $g_\beta$, $\beta<\mu$, with $f\subseteq g_\beta$.

For any such $a\in G$ define a function $f_a:\{\alpha^a_0,\dots,\alpha_{n_a}^a\}\to \omega$ by setting $f_a(\alpha^a_i)=i$. Let $\beta^a<\mu$ be minimal such that $f_a\subseteq g_{\beta^a}$.

For any $\alpha<\mu^+$ let $s_\alpha :G_\alpha\to \mu$ be a bijection. For simplicity, we also denote by $s_\alpha$ the induced bijection between $G_\alpha^n$ and $\mu^n$. 
Let $Y=\aleph_0 \times \mu^2\times \mu^{<\omega}$; note that $|Y|=\mu$.
%

Let $\phi:G\to Y$ be a function mapping $a\in G$ to \[(n_a, \beta^a,s_{\alpha^a_0+1}(a),(s_{\alpha_i^a+1}(c^a_{i-1}))_{1\leq i\leq n_a}).\] Since $\phi$ cannot be a legal coloring there exist $a,b\in G$ distinct elements satisfying that $\phi(a)=\phi(b)$ and $\varphi(a,b)$ holds, i.e. $a\mathrel{E} b$.

%
%

Without loss of generality, assume that $\alpha^a_0\geq \alpha^b_0$. Note that necessarily, $\alpha^a_0>\alpha^b_0$, since otherwise, as $s_{\alpha_0^a+1}(a)=s_{\alpha_0^b+1}(b)$ we conclude that $a=b$, contradiction. In particular, $n_a>0$ and so also $n_b=n_a>0$.

Also, if $\alpha_i^a=\alpha^b_j$ for some $i,j$ then, since by assumption $\beta^a=\beta^b$, $i=j$.

Let $n>0$ be minimal such that $\alpha_n^a=\alpha^b_n$ (such $n$ exists since this equality holds for $n=n_a>0$). Consider the (ordered) set $A'=\{\alpha_i^a,\alpha_i^b:i< n\}$. Note that the elements in $A'$ are distinct. Now let $A=A'\cup\{\alpha^a_n=\alpha^b_n\}$.

We will call an element $\alpha_i^a\in A$ (with $i>0$) an \emph{$a$-pivot} if there exists an element $\alpha_j^b\in A$ with $\alpha^a_{i-1}>\alpha_j^b>\alpha_i^a$ (and likewise a \emph{$b$-pivot} for $\alpha_i^b\in A$ with $i>0$). Note that $\alpha^a_n=\alpha^b_n$ is either an a-pivot or a b-pivot.

We will prove the following by (downward) induction.
\begin{claim}
For every $a$-pivot $\alpha_i^a\in A$, with $i>0$, $\psi(b,c^a_{i-1})$ holds and for every $b$-pivot $\alpha_i^b\in A$, with $i>0$, $\psi(a,c^b_{i-1})$ holds.
\end{claim}
\begin{claimproof}
%
%
%
Let $\alpha^a_{t+1}\in A$ be an $a$-pivot (possibly $t+1=n$). Let $v<n$ be minimal with $\alpha^a_t>\alpha^b_{v}>\alpha^a_{t+1}$; $\alpha^b_v$ is either a $b$-pivot and $v=s+1$ for some $s$ or $v=0$. Assume, first, that $\alpha^b_{s+1}$ is a $b$-pivot; so $\psi(a,c^b_s)$ holds by the induction hypothesis. As $c^b_s\in G_{\alpha^b_{s+1}+1}\subseteq G_{\alpha_t^a}$ and $c^a_t\in G_{\alpha^a_{t+1}+1}\subseteq G_{\alpha^b_{s+1}}\subseteq G_{\alpha^b_s}$, we can apply Fact \ref{F:local Harrington} with $\tp_\varphi(a/G_{\alpha^a_t})$ and $\tp_\varphi(b/G_{\alpha^b_s})$ and conclude that $\psi(b,c^a_t)$ holds.  

Now assume that $v=0$. Since $b\in G_{\alpha^b_0+1}\subseteq G_{\alpha^a_t}$ and $\varphi(a,b)$ holds then, as $\psi(y,c^a_t)$ defines $\tp_\varphi (a/G_{\alpha^a_t})$ we get that $\psi(b,c^a_t)$ holds; as needed.

The case where $\alpha^b_{t+1}\in A$ is a $b$-pivot is proved similarly.
\end{claimproof}

Note that $c^a_{n-1}\in G_{\alpha^a_n+1}=G_{\alpha^b_n+1}\ni c^b_{n-1}$. As $\phi(a)=\phi(b)$, we necessarily have $c:=c^a_{n-1}=c^b_{n-1}$.
 If $\alpha^a_n$ is an a-pivot we have that $\psi(a,c)$ holds and if it is a b-pivot then $\psi(b,c)$ holds. Assume the former holds (the proof where the latter holds is identical).
 
%
%

We inductively construct a sequence $(d_\alpha)_{\alpha<\mu^+}$ in $V$ such that:
\begin{list}{•}{}
\item $\psi(d_\alpha,c)$ holds for all $\alpha<\mu^+$.
\item $\varphi(d_\alpha,d_\beta)$ for all $\alpha\neq \beta<\mu^+$.
\end{list}

Suppose we constructed $(d_\alpha)_{\alpha<\gamma}$. Let $\alpha_{n-1}^a<\delta<\mu^+$ be such that $d_\alpha\in G_\delta$ for any $\alpha<\gamma$. By $(\dagger)$ there exists $d_\gamma\in G\setminus G_\delta$ such that $\tp_\varphi(d_\gamma/G_\delta)$ is definable by $\psi(x,c)$.

Since $\psi(d_\alpha,c)$ holds for any $\alpha<\gamma$ it follows that $\varphi(d_\gamma,d_\alpha)$ holds (and also $\varphi(d_\alpha,d_\gamma)$ by symmetry). Additionally, $\psi(x,c)$ defines both $\tp_\varphi(a/G_{\alpha^a_{n-1}})$ and $\tp_\varphi(d_\gamma/G_{\alpha^a_{n-1}})$; hence $a\equiv^\varphi_{G_{\alpha^a_{n-1}}} d_\gamma$.  As $c\in G_{\alpha^a_{n-1}}$ and $\psi(x,c)$ is equivalent to a boolean combination of instances of $\varphi^{\text{opp}}$-formulas over $G_{\alpha^a_{n-1}}$, it follows by symmetry of $\varphi$ that $a\equiv^{\varphi^{\text{opp}}}_{G_{\alpha^a_{n-1}}} d_\gamma$ so $\psi(d_\gamma,c)$ holds as well.
%
%
%
%
\end{proof}

Given Proposition \ref{P:stable} (and Proposition \ref{P:cannot embed infinite complete graph}), it is natural to ask under which conditions on the cardinality of $G$ does the  proposition hold. The following example shows that it fails for strong limit cardinals.

\begin{proposition}\label{P: stable no arbt large finite cliques}
Let $\lambda$ be a strong limit cardinal.\footnote{That means that $2^\mu<\lambda$ for any $\mu<\lambda$. Any such cardinal is a limit cardinal. An example is $\beth_{\omega}(\aleph_0)$.} There exists a non-stable graph with stable edge relation $G$ of cardinality $\lambda$ with $\chi(G)\geq \lambda$ for which we cannot embed arbitrary large finite cliques. In fact, it is triangle-free.
\end{proposition}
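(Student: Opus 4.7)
The plan is to take $G$ to be a disjoint union of shift graphs $\Sh_2(\kappa)$, since Lemma \ref{L:properties of shift2} already hands us, for each single factor, triangle-freeness, stability of the edge relation, and non-stability of the complete theory; only the cardinality and chromatic number must be arranged globally. Concretely, I would fix a cofinal sequence $(\mu_\alpha)_{\alpha<\cf(\lambda)}$ of infinite cardinals below $\lambda$, and, using that $\lambda$ is strong limit so $(2^{\mu_\alpha})^+<\lambda$, set
\[
G=\bigsqcup_{\alpha<\cf(\lambda)}\Sh_2\bigl((2^{\mu_\alpha})^+\bigr),
\]
viewed as an $\{E\}$-structure with no edges between the pieces.

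First, $|G|=\sum_{\alpha<\cf(\lambda)}(2^{\mu_\alpha})^+=\lambda$: each summand exceeds $\mu_\alpha$, so the sequence $((2^{\mu_\alpha})^+)_\alpha$ is cofinal in $\lambda$, and $\cf(\lambda)\le\lambda$. Second, any triangle in a disjoint union lives in a single component, and each component is triangle-free by Lemma \ref{L:properties of shift2}(3), hence $G$ itself is triangle-free; in particular it contains no clique of size $\ge 3$. Third, a coloring of $G$ restricts to a coloring of every component, so by Fact \ref{F:Sh-high chrom} with $r=2$ and $\mu=\mu_\alpha$ (using $\beth_1(\mu_\alpha)=2^{\mu_\alpha}$) we obtain
\[
\chi(G)\ge\sup_\alpha\chi\bigl(\Sh_2((2^{\mu_\alpha})^+)\bigr)\ge\sup_\alpha\mu_\alpha^+=\lambda.
\]

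For the model-theoretic properties: if the edge relation of $G$ were unstable, a witnessing chain $(a_i)_{i<\omega}$ with $a_i\E a_j \Leftrightarrow i<j$ would be entirely contained in a single component (no edges span components), contradicting the stability of the edge relation of that $\Sh_2((2^{\mu_\alpha})^+)$ given by Lemma \ref{L:properties of shift2}(2); hence $E$ is stable on $G$. Conversely, the formulas $\psi_{n,m}$ appearing in the proof of Lemma \ref{L:properties of shift2}(1) witness the instability of $\mathrm{Th}(\Sh_2(\mu))$ using parameters and witnesses drawn from a single shift graph, so choosing those parameters inside one component $\Sh_2((2^{\mu_\alpha})^+)$ transfers the same unstable configuration verbatim to $G$. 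Therefore $\mathrm{Th}(G)$ is not stable.

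The main (and essentially only) point requiring care is the cardinality bookkeeping: being strong limit ensures $(2^{\mu_\alpha})^+<\lambda$ for every $\alpha$, while cofinality of the sequence $(\mu_\alpha)$ still forces $\sup_\alpha(2^{\mu_\alpha})^+=\lambda$, so $|G|$ comes out to exactly $\lambda$. Otherwise the argument is a routine packaging of Lemma \ref{L:properties of shift2} and Fact \ref{F:Sh-high chrom} via disjoint unions.
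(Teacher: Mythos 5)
Your construction is essentially identical to the paper's: the paper takes $G=\bigoplus_{\mu<\lambda}\Sh_2(\beth_1(\mu)^+)$ over all infinite $\mu<\lambda$, which is the same as your disjoint union over a cofinal sequence (note $\beth_1(\mu)=2^\mu$), and both arguments rest on Lemma \ref{L:properties of shift2} and Fact \ref{F:Sh-high chrom}. Your verification of the cardinality, chromatic number, triangle-freeness, and the transfer of (in)stability to the disjoint union is correct and in fact more detailed than the paper's.
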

\begin{proof}
For any $\mu<\lambda$, let $G_\mu=\Sh_2(\beth_1(\mu)^+)$. By Lemma \ref{L:properties of shift2}, it is not stable but has a stable edge relation, and it is triangle-free. By Fact \ref{F:Sh-high chrom}, $\chi(G_\mu)\geq \mu^+$.

Let $G=\bigoplus_{\mu<\lambda}G_\mu$ be the direct sum of all of these graphs. Thus $\chi(G)\geq \lambda$ and $|G|=\lambda$. The graph $G$ is not stable but its edge relation is stable.  On the other hand, since each of the $G_\mu$'s is triangle-free,  we cannot embed arbitrary large finite cliques into $G$.
\end{proof}

\begin{remark} \label{R:even stable theory}
Using Remark \ref{R:Sh3}(2), we may replace $\Sh_2(\beth_1(\mu)^+)$ by $\Sh_3^{sym}(\beth_2(\mu)^+)$ and arrive at a stable graph with the prescribed properties.
\end{remark}

\section{The Chromatic Spectrum}\label{S:chromatic}
We rephrase the results of the previous section in a different manner.

For $T$ a theory of graphs and $\mu$ an infinite cardinal, let \[\ch(\mu)=\min\{ |G|: G\vDash T,\, \chi(G)\geq \mu\}.\]

We employ the convention that $\min\emptyset=\infty$. Note that if $\ch(\mu)=\infty$ then $\ch(\lambda)=\infty$ for all $\lambda\geq \mu$.

\begin{remark}
\begin{enumerate}
\item Since for any graph $G$, $\chi(G)\leq |G|$, $\ch(\mu)\geq \mu$.
\item Let $T=\mathrm{Th}(\Sh_k(\omega))$. By Fact \ref{F:Sh-high chrom}, $\ch(\lambda^+)\leq \beth_{k-1}(\lambda)^+$. On the other hand, if towards a contradiction we assume that $\ch(\lambda^+)<\beth_{k-1}(\lambda)^+$ then we can find $M\vDash T$ with $\chi(M)\geq \lambda^+$ and $|M|\leq \beth_{k-1}(\lambda)$. By L\"owenheim-Skolem we can can assume that $|M|=\beth_{k-1}(\lambda)$. By Lemma \ref{L:embed model of shift into symmetric}, we may embed $M$ into $\Sh_n^{sym}(\beth_{k-1}(\lambda))$, so by Fact \ref{F:Sh-high chrom} $\chi(M)\leq \lambda$, contradiction. We conclude that $\ch(\lambda^+)=\beth_{k-1}(\lambda)^+$.
\end{enumerate}
\end{remark}

We can now rephrase the main result of \cite{1211} using this function:

\begin{proposition}
The following are equivalent for a stable theory of graphs  $T$:
\begin{enumerate}
\item There is some $G\vDash T$ and a natural number $k$ such that $G$ contains $\Sh_k(n)$ for all $n$.
\item $\ch(\beth_2(\aleph_0)^+)< \infty$.
\item For any cardinal $\mu$,  $\ch(\mu)< \infty$.
\item For any  cardinal $\mu$, $\ch(\mu)< \beth_\omega(\mu)$.
\end{enumerate}
\end{proposition}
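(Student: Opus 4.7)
The plan is to prove the four-way equivalence via the cycle $(1)\Rightarrow(4)\Rightarrow(3)\Rightarrow(2)\Rightarrow(1)$. Two of the links are purely formal: $(4)\Rightarrow(3)$ is immediate since $\beth_\omega(\mu)$ is a cardinal and hence $<\infty$, and $(3)\Rightarrow(2)$ is just the specialization $\mu=\beth_2(\aleph_0)^+$. The real content lies in the other two implications.

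For $(1)\Rightarrow(4)$, fix $k$ and $G\vDash T$ containing $\Sh_k(n)$ for every $n<\omega$, and fix an infinite cardinal $\mu$ (the finite case is trivial). For each $n$, ``$G$ contains $\Sh_k(n)$ as a subgraph'' is expressible by a single first-order $L$-sentence asserting the existence of finitely many distinct vertices with the prescribed edges, so this sentence lies in $T$. A standard compactness argument then applies: adjoin fresh constants $v_s$ indexed by the vertex set of $\Sh_k(\beth_{k-1}(\mu)^+)$, and assert pairwise distinctness together with $v_s E v_t$ for every edge $\{s,t\}$ of $\Sh_k(\beth_{k-1}(\mu)^+)$. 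The resulting theory is finitely consistent thanks to $G$, so some $G^*\vDash T$ contains $\Sh_k(\beth_{k-1}(\mu)^+)$ as a subgraph; downward L\"owenheim--Skolem trims $G^*$ to cardinality $\beth_{k-1}(\mu)^+$ while preserving this subgraph. Since chromatic number is monotone under (not necessarily induced) subgraph inclusion, Fact~\ref{F:Sh-high chrom} yields $\chi(G^*)\geq\mu^+\geq\mu$, whence $\ch(\mu)\leq\beth_{k-1}(\mu)^+\leq\beth_k(\mu)<\beth_\omega(\mu)$, which is (4).

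The substantive step is $(2)\Rightarrow(1)$, which I would derive directly from the main theorem of \cite{1211} recalled in the introduction: any stable graph of chromatic number strictly greater than $\beth_2(\aleph_0)$ contains every finite subgraph of some shift graph $\Sh_k(\omega)$ with $0<k<\omega$. If $\ch(\beth_2(\aleph_0)^+)<\infty$, then there is some $G\vDash T$ with $\chi(G)\geq\beth_2(\aleph_0)^+>\beth_2(\aleph_0)$, so that theorem applies and furnishes condition $(1)$. The proposition is therefore essentially a repackaging of \cite{1211} in the language of the function $\ch$; the only honest computation is the compactness/L\"owenheim--Skolem argument for $(1)\Rightarrow(4)$ above, while the main obstacle, the structural theorem of \cite{1211} itself, is taken here as a black box.
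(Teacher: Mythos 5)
Your proposal is correct and follows essentially the same route as the paper: the same cycle $(2)\Rightarrow(1)\Rightarrow(4)\Rightarrow(3)\Rightarrow(2)$, with the main theorem of \cite{1211} used as a black box for $(2)\Rightarrow(1)$ (the paper states it in the contrapositive form $\neg(1)\Rightarrow\neg(2)$, which is immaterial) and the identical compactness/L\"owenheim--Skolem argument combined with Fact~\ref{F:Sh-high chrom} for $(1)\Rightarrow(4)$.
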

\begin{proof}
$\neg (1)\implies \neg (2)$. By the main theorem of \cite[Corollary 6.2]{1211}, $\chi(G)\leq \beth_2(\aleph_0)$ for all $G\vDash T$, i.e. $\ch(\beth_2(\aleph_0)^+)=\infty$.

$(4)\implies (3)\implies (2)$. Easy.

$(1)\implies (4)$.  Let $\mu$ be an infinite cardinal. By compactness and (1) we can embed $\Sh_k(\beth_{k-1}(\mu)^+)$ in a large enough model of $T$. So by L\"owenheim-Skolem we can find a model $G$ of cardinality $\beth_{k-1}(\mu)^+$ with $\chi(G)\geq \chi\left(\Sh_k(\beth_{k-1}(\mu)^+\right)\geq \mu^+\geq \mu$ (using Fact \ref{F:Sh-high chrom}). Consequently, $\ch(\mu)\leq\beth_{k-1}(\mu)^+< \beth_\omega(\mu)$.
\end{proof}

Next, we phrase the results of the previous section for simple graphs and graphs with stable edge relation using $\ch$. 

\begin{proposition}\label{P:chi for simple and stable}
Let $T$ be a theory of graphs and assume that either $T$ is simple or the edge relation is stable. The following are equivalent:
\begin{enumerate}
\item $T$ proves the existence of arbitrary large finite cliques.
\item for any infinite cardinal $\mu$, $\ch(\mu)= \mu$.
\item for any infinite cardinal $\mu$, $\ch(\mu^+)= \mu^+$.
\item there exists an infinite cardinal $\mu$ for which $\ch(\mu^+)=\mu^+$.
\end{enumerate}
If $T$ is simple then they are also equivalent to:
\begin{enumerate}
\item[(5)] there exists an infinite cardinal $\mu$ with $\ch(\mu^+)\leq 2^\mu$.
\item[(6)] for any infinite cardinal $\mu$,  $\ch(\mu^+)\leq 2^{\mu}$.
\end{enumerate}

\end{proposition}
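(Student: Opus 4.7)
The plan is to establish the cycle of implications $(1)\Rightarrow(2)\Rightarrow(3)\Rightarrow(4)\Rightarrow(1)$ for the general statement (under the running hypothesis that $T$ is simple or that the edge relation is stable), and then separately verify the loop $(1)\Rightarrow(6)\Rightarrow(5)\Rightarrow(1)$ in the simple case. Most implications are formal manipulations of the definitions; the genuine content is concentrated in $(4)\Rightarrow(1)$ (and its simple-case analogue $(5)\Rightarrow(1)$), which directly invokes the main propositions of Section \ref{sec:infinite cliques}.

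For the easy directions, $(1)\Rightarrow(2)$ follows by compactness and L\"owenheim--Skolem: if $T$ proves the existence of arbitrarily large finite cliques, then by compactness $T$ has a model containing a clique of any prescribed infinite cardinality, and by L\"owenheim--Skolem we may shrink to a model $G\vDash T$ with $|G|=\mu$ and containing a clique of size $\mu$; hence $\chi(G)\geq\mu$ and $\ch(\mu)\leq\mu$, while the trivial bound $\chi(G)\leq|G|$ gives $\ch(\mu)\geq\mu$. The implications $(2)\Rightarrow(3)\Rightarrow(4)$ are immediate. In the simple case, $(3)$ combined with Cantor's theorem yields $\ch(\mu^+)=\mu^+\leq 2^\mu$, which is $(6)$, and $(6)\Rightarrow(5)$ is trivial.

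The key step is $(4)\Rightarrow(1)$: pick $\mu$ witnessing $(4)$ and $G\vDash T$ of cardinality $\mu^+$ with $\chi(G)\geq\mu^+$. If $T$ is simple, Proposition \ref{P:cannot embed infinite complete graph} applies (its hypothesis $|G|\leq 2^\mu$ is trivially satisfied since $\mu^+\leq 2^\mu$) and produces some $G'\equiv G$ containing a clique of cardinality $\mu^+$; in particular $T$ proves the existence of arbitrarily large finite cliques. If instead only the edge relation is assumed stable, Proposition \ref{P:stable} applies directly and delivers an infinite clique already inside $G$ itself. The implication $(5)\Rightarrow(1)$ in the simple case is handled identically via Proposition \ref{P:cannot embed infinite complete graph}: condition $(5)$ supplies exactly the hypothesis $|G|\leq 2^\mu$ required by that proposition.

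There is no substantive obstacle, since the heavy machinery was developed in Section \ref{sec:infinite cliques}; the proposition is essentially a reformulation of those results in the language of $\ch$. The one subtlety worth flagging is why the equivalences with $(5)$ and $(6)$ are only claimed in the simple case: it is precisely because Proposition \ref{P:cannot embed infinite complete graph} allows $|G|$ up to $2^\mu$, whereas Proposition \ref{P:stable} requires $|G|=\mu^+$. Thus the stable-edge hypothesis is not strong enough (from what we have proved) to pass from $\ch(\mu^+)\leq 2^\mu$ to a clique conclusion, and so $(5)$ and $(6)$ are not asserted in that case.
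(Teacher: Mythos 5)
Your proposal is correct and follows essentially the same route as the paper: the cycle $(1)\Rightarrow(2)\Rightarrow(3)\Rightarrow(4)\Rightarrow(1)$ with the only substantive step being $(4)\Rightarrow(1)$ via Propositions \ref{P:cannot embed infinite complete graph} and \ref{P:stable}, and the loop through $(5)$ and $(6)$ handled by the same simple-case proposition. Your remark explaining why $(5)$ and $(6)$ are restricted to the simple case (the $2^\mu$ versus $\mu^+$ cardinality hypotheses of the two propositions) is a correct and worthwhile observation, though not needed for the proof itself.
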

\begin{proof}
$(1)\implies (2)$.  By compactness and L\"owenheim-Skolem, there is a model $G$ of cardinality  $\mu$ which has an infinite clique of cardinality $\mu$. Thus $\chi(G)\geq \mu$, so $\ch(\mu)=\mu$.

 $(2)\implies (3)\implies (4)$ is easy.
 
$(4)\implies (1)$.  Assume that $(4)$ holds and let $\mu$ be an infinite cardinal for which $\ch(\mu^+)=\mu^+$. Thus there exists a model $G$ with $\chi(G)\geq \mu^+$ and $|G|= \mu^+$. By Proposition \ref{P:cannot embed infinite complete graph} for the simple case and Proposition \ref{P:stable} for stable edge relation case, $G$ contains arbitrary large finite cliques.

Assume that $T$ is simple.

$(4)\implies (5)$ is easy and $(5)\implies (1)$ uses Proposition \ref{P:cannot embed infinite complete graph} as above. 

$(3)\implies (6)\implies (5)$ is easy.
\end{proof}

Is there an analog to Proposition \ref{P:chi for simple and stable} for general shift graphs? A reasonable suggestion is:
\begin{conjecture}
  Suppose that $T$ is stable. If for all cardinals $\mu$, $\ch(\mu^+) \leq \beth_{n-1}(\mu)^+$ then for some $m\leq n$, there is an embedding of $\Sh_{m}(\omega)$ in any $\omega$-saturated model of $T$.
\end{conjecture}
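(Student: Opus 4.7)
The natural plan is to prove the conjecture by generalizing the proof of Proposition~\ref{P:stable}, which is the base case $n=1$. Fix an infinite $\mu$ and, using the hypothesis, pick $G \models T$ with $|G| \leq \beth_{n-1}(\mu)^+$ and $\chi(G) \geq \mu^+$; by L\"owenheim--Skolem we may take $|G|=\beth_{n-1}(\mu)^+$. Take a continuous elementary chain $(G_\alpha)_{\alpha<\beth_{n-1}(\mu)^+}$ with $|G_\alpha| \leq \beth_{n-1}(\mu)$ satisfying the conclusion of Lemma~\ref{L:club of models}, and for each vertex $a \in G$ build the canonical-base tower $(c^a_k,\alpha^a_k)_{k \leq n_a}$ of Proposition~\ref{P:stable}, where $\psi(y,c^a_k)$ defines $\tp_\varphi(a/G_{\alpha^a_k})$ and the levels strictly descend through the chain.

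The key conceptual replacement is to swap the Engelking--Kar\l owicz argument (which effectively colors pairs) for a higher-arity Erd\H{o}s--Rado application using the partition relation
\[
\beth_{n-1}(\mu)^+ \longrightarrow (\mu^+)^n_\mu.
\]
I would assign to each vertex $a$ a $\mu$-valued invariant encoding $n_a$, the shape of its tower, and the $s_{\alpha+1}$-codes of the parameters $c^a_k$. Because $\chi(G) \geq \mu^+$, this coloring cannot be proper; applying Erd\H{o}s--Rado to the induced coloring of $n$-tuples yields an $n$-indiscernible sequence $(a_i)_{i<\mu^+}$ whose towers are aligned: the layers either coincide or are strictly separated in a uniform way, and $\varphi$ has a fixed pattern on $n$-tuples. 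Here the earlier ``pivot'' analysis of Proposition~\ref{P:stable} becomes a multi-layer analysis: each time a $k$-th level pivots across an adjacent tower, Fact~\ref{F:local Harrington} propagates a $\psi$-edge one layer up, and iterating through depths $1,2,\dots,m$ yields an embedded pattern matching $\Sh_m(\omega)$ for some $m \leq n$, where $m$ is the maximal depth at which the surviving configuration is non-degenerate.

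Once such a configuration is found in $G$, the finite subgraphs of $\Sh_m(\omega)$ become part of the $L$-theory $T$ (since they appear in a model of $T$), so by compactness they are all consistent; any $\omega$-saturated model of $T$ realizes all these partial $\Sh_m$-types coherently, giving an embedding of $\Sh_m(\omega)$ into every $\omega$-saturated model.

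The main obstacle is the combinatorial step that the specific depth $m$ at which the indiscernible tower is non-trivial yields exactly the shift graph $\Sh_m(\omega)$ and not some other stable $m$-uniform pattern. The proof of Proposition~\ref{P:stable} uses a single application of forking symmetry of $\varphi$ to lift one ``pivot'' into an edge; for the general case one needs an iterated symmetry statement for chains of canonical parameters of length $m$, controlling the Boolean combination of $\varphi^{\mathrm{opp}}$-instances that $\psi(y,c^a_k)$ represents. A secondary difficulty is checking that the surviving pattern is \emph{induced}: non-edges between non-consecutive tuples in the shift pattern must also be forced, which should follow from stability (any other edge would produce unstable order on the indiscernibles) together with the triangle-free style argument of Lemma~\ref{L:properties of shift2}(3). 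With these two technical points settled, the rest of the proof is a straightforward extension of Proposition~\ref{P:stable}.
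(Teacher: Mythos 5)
This statement is posed in the paper as a \emph{conjecture} (it is Question \ref{Q:The question on Ch} with GCH removed); the paper offers no proof of it, so there is nothing to compare your argument against except the $n=1$ case, which is Proposition \ref{P:stable}. What you have written is a plan rather than a proof, and the plan defers exactly the step that makes the statement a conjecture. Concretely, two things are missing. First, the Erd\H{o}s--Rado step is not actually set up: in Proposition \ref{P:stable} the map $\phi$ colors \emph{vertices}, and the only combinatorial output of ``$\phi$ is not a legal coloring'' is a single edge $a \mathrel{E} b$ with $\phi(a)=\phi(b)$. To invoke $\beth_{n-1}(\mu)^+ \longrightarrow (\mu^+)^n_\mu$ you must first produce a coloring of $n$-tuples of ordinals whose homogeneous set carries information about $E$, i.e.\ you must already have identified vertices of $G$ with increasing $n$-tuples in a way compatible with the edge relation. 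No such identification is given, and producing one is essentially the theorem. Second, your ``main obstacle'' --- that the surviving depth-$m$ configuration is $\Sh_m(\omega)$ and not some other stable $m$-ary pattern --- is not a technical point to be settled later; it \emph{is} the conjecture. The single application of Fact \ref{F:local Harrington} in Proposition \ref{P:stable} converts one pivot into one $\psi$-instance and then into an infinite clique ($=\Sh_1$); an ``iterated symmetry statement for chains of canonical parameters'' that would force the shift-graph incidence pattern at depth $m$ is not formulated, and it is not clear that one exists.

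A smaller but real issue is the last paragraph: knowing that all finite subgraphs of $\Sh_m(\omega)$ occur in some model of $T$ gives, by compactness, that the full edge-diagram of $\Sh_m(\omega)$ is consistent with $T$; but to embed $\Sh_m(\omega)$ into \emph{every} $\omega$-saturated model you must realize a countable coherent family of types over finite parameter sets, which requires the finite configurations to be nested restrictions of one another (or an appeal to the machinery of \cite{1196,1211}), not just individually consistent. In summary: the proposal is a reasonable research program whose first step correctly mirrors Proposition \ref{P:stable}, but it does not constitute a proof, and the gaps you acknowledge are the substance of the open problem rather than routine verifications.
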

Note this is exactly Question \ref{Q:The question on Ch} without assuming GCH.

\appendix
\section{An Example by Hajnal and Komj\'{a}th} \label{A:Hajnal-Komjath example}

In this section we present an example due to Hajnal and Komj\'{a}th \cite[Theorem 4]{HK}. This is an example of a graph of size continuum whose chromatic number is $\aleph_1$, which does not contain all finite subgraphs of any shift graph $\Sh_n(\omega)$. They gave it as an example refuting Taylor's strong conjecture (which does hold outright for $\omega$-stable graphs with a close relative of it holding for stable theories in general by \cite{1196,1211}). The main goal here is to prove that this example has the independence property (IP) (thus is not stable) and furthermore that its theory is not simple.

\begin{definition}\label{D:special}
  A graph $G=(V,E)$ is called \emph{special} if there exists partial order $\prec$ on $V$ satisfying that:
  \begin{enumerate}
    \item if $x\E y$ then either $x\prec y$ or $y\succ x$ and
    \item there is no circuit $C=\langle x_0,\dots,x_{n-1}\rangle$, $n\geq 3$, of the form \[x_0\prec x_1\prec \dots\prec x_{m-1}\prec x_m\succ x_{m+1}\succ \dots\succ x_{n-1}\succ x_0.\]
  \end{enumerate} 
\end{definition}

\begin{proposition}\label{P:special has no shift}\cite[Theorem 4]{HK}
Let $G=(V,E)$ be a special graph as witnessed by $\prec $. Then for all $n\geq 1$, $G$ does not contain all the finite subgraphs of $\Sh_n(\omega)$.
\end{proposition}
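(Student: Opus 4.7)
My plan is to exhibit, for each $n\geq 1$, a finite subgraph of $\Sh_n(\omega)$ that cannot embed into any special graph. For $n=1$ we have $\Sh_1(\omega)=K_\omega$, and the triangle is already an obstruction: given $\{a,b,c\}$ in a special graph, all three edges are $\prec$-comparable, so by transitivity (and relabeling) $a\prec b\prec c$, making $\langle a,b,c\rangle$ the forbidden circuit $a\prec b\prec c\succ a$ of Definition~\ref{D:special}(2) with $m=2$.

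For $n\geq 2$ I would argue by contradiction via Ramsey's theorem. Suppose $G$ is special with witness $\prec$ and contains every finite subgraph of $\Sh_n(\omega)$. Fix $N$ large enough that any $2$-coloring of the increasing $(n+1)$-tuples from $[N]$ has a monochromatic subset of size $2n+1$, and embed $\Sh_n([N])$ into $G$. Each edge of $\Sh_n([N])$ corresponds canonically to an increasing $(n+1)$-tuple from $[N]$, so the embedding, combined with $\prec$, gives a $2$-coloring of these tuples encoding the orientation of the corresponding shift edge. Extract a monochromatic $A=\{a_0<\cdots<a_{2n}\}\subseteq[N]$; replacing $\prec$ by its reverse if necessary, we may assume that for every increasing $(a_{i_0},\ldots,a_{i_n})$ from $A$ we have $(a_{i_0},\ldots,a_{i_{n-1}})\prec(a_{i_1},\ldots,a_{i_n})$.

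The remaining step is to construct, inside $\Sh_n(A)$, a simple cycle of length $2n+1$ whose induced orientation is exactly the forbidden monotone pattern. Set $v_i=(a_i,\ldots,a_{i+n-1})$ for $0\leq i\leq n+1$ and $w_j=(a_{n-j},\ldots,a_{n-1},a_{n+1},\ldots,a_{2n-j})$ for $1\leq j\leq n-1$. The $v_i$ use consecutive indices while each $w_j$ has a gap at index $n$, so the $2n+1$ vertices are distinct, and consecutive vertices in $v_0,v_1,\ldots,v_{n+1},w_1,\ldots,w_{n-1},v_0$ differ by a single shift, forming a simple cycle. Under the monochromatic orientation each edge $v_i{-}v_{i+1}$ is oriented upward (the cycle follows a forward shift) while each edge of the return path $v_{n+1}{-}w_1{-}\cdots{-}w_{n-1}{-}v_0$ is oriented downward (a forward shift traversed backwards along the cycle). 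Consequently the cycle reads $v_0\prec v_1\prec\cdots\prec v_{n+1}\succ w_1\succ\cdots\succ w_{n-1}\succ v_0$, which is a forbidden circuit of Definition~\ref{D:special}(2), contradicting that $G$ is special.

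The main technical point to verify is that every return-path edge $w_j{-}w_{j+1}$ really is a shift edge and is oriented downward under the monochromatic coloring; this reduces to identifying, in each case, the underlying increasing $(n+1)$-tuple and applying the chosen color. The distinctness of vertices and the forward-shift orientation along the $v_i$-path are immediate.
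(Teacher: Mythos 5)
Your proof is correct and follows essentially the same route as the paper: Ramsey's theorem applied to the increasing $(n+1)$-tuples yields a monochromatic set of size $2n+1$, and your cycle $v_0\prec\cdots\prec v_{n+1}\succ w_1\succ\cdots\succ w_{n-1}\succ v_0$ is exactly the paper's pair of monotone chains from $(a_0,\dots,a_{n-1})$ to $(a_{n+1},\dots,a_{2n})$ (the consecutive windows versus the windows skipping $a_n$), glued into the forbidden circuit of Definition~\ref{D:special}(2). The routine verification you defer at the end does go through as stated.
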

\begin{proof}
Assume towards a contradiction that $G$ contains all finite subgraphs of $\Sh_n(\omega)$ for some $n\geq 1$. If $n=1$ then it must contain a triangle so obviously contradicts Definition \ref{D:special}(1); so we assume that $n\geq 2$. Coloring pairs of $<$-increasing tuples, by Ramsey there is some integer $r$ such that if $f:\Sh_n(r)\to G$ is an embedding, there is $A\subseteq r$, $|A|=2n+1$ such that either $f(a_0,\dots,a_{n-1})\prec f(a_1,\dots,a_n)$ for all strictly increasing $n+1$-tuples from $A$ $(a_0,\dots,a_n)$ or $f(a_0,\dots,a_{n-1})\succ f(a_1,\dots,a_n)$ for all strictly increasing $n+1$-tuples from $A$, $(a_0,\dots,a_n)$. 

Assume the former occurs and that for simplicity $A=\{0,\dots,2n\}$. Then
\[(0,\dots,n-1)\prec (1,\dots,n-1,n+1)\prec\dots\prec (n-1,n+1,\dots, 2n-1)\prec (n+1,\dots,2n)\]
and
\[(0,\dots,n_1)\prec (1,\dots,n-1,n)\prec\dots\prec (n,n+1,\dots,2n-1)\prec (n+1,\dots,2n),\]
which is a contradiction.
\end{proof}

%

\begin{proposition}\label{P:Hajnal-Komjath Example}\cite[Theorem 4]{HK}
There exists a graph $G=(V,E)$, with $|V|=2^{\aleph_0}$ satisfying the following properties:
\begin{enumerate}
\item $G$ is special and in particular for every $n\geq 1$ it does not contain all finite subgraphs of $\Sh_n(\omega)$.
\item $\chi(G)=\aleph_1$.
\item $G$ has IP and in particular is not stable (in fact the edge relation has IP).
\item $G$ is not simple.
\end{enumerate}
\end{proposition}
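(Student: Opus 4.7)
The plan is to import the explicit Hajnal-Komj\'{a}th construction from \cite[Theorem 4]{HK}: a set $V$ of cardinality $2^{\aleph_0}$ (typically a family of branches through a countably infinite tree), a partial order $\prec$ on $V$ induced by that tree, and an edge relation $E$ designed so that $xEy$ holds only for $\prec$-comparable $x,y$. For item (1), I would check Definition \ref{D:special} directly against the construction: clause (1) is built into the definition of $E$, while clause (2) (no zigzag cycles) is proved by case analysis on how an alternating $\prec$-chain can close up in the underlying tree skeleton. Proposition \ref{P:special has no shift} then immediately yields that no $\Sh_n(\omega)$ embeds into $G$.

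For item (2) I would prove both inequalities separately. The upper bound $\chi(G) \leq \aleph_1$ would come from a natural coloring that partitions $V$ into $\aleph_1$ many $\prec$-antichains, each automatically $E$-independent by clause (1) of Definition \ref{D:special}. The lower bound $\chi(G) \geq \aleph_1$ is the combinatorial heart of \cite[Theorem 4]{HK}: assuming a countable coloring $c : V \to \omega$, one isolates an $\aleph_1$-sized monochromatic subfamily by pigeonhole, then uses a Fodor-style reduction on the tree to locate a monochromatic edge, contradicting the coloring.

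For items (3) and (4) I would work inside the construction to witness IP and non-simplicity for the edge formula $\varphi(x,y) := E(x,y)$. For IP, given $n$ I would select $b_0, \dots, b_{n-1} \in V$ whose tree-paths split early enough to be ``independent,'' and for each $S \subseteq \{0, \dots, n-1\}$ construct a vertex $a_S \in V$ adjacent to exactly $\{b_i : i \in S\}$; the construction is locally rich enough to realize every prescribed finite adjacency pattern by judicious choice of an amalgamating branch. For non-simplicity I would exhibit a tree of parameters $(c_\eta)_{\eta \in \omega^{<\omega}}$ in $V$ witnessing the tree property for $\varphi$: along every branch $f \in \omega^\omega$ the set $\{E(x, c_{f \restriction n}) : n < \omega\}$ is consistent (realized by a suitably chosen branch), while for each $\eta$ the set $\{E(x, c_{\eta^\frown i}) : i < \omega\}$ is $2$-inconsistent---the $2$-inconsistency among siblings is where specialness pays off, since siblings can be arranged so that any common $E$-neighbor would produce a forbidden zigzag cycle, ruling one of the two edges out.

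The principal obstacle is the tension between specialness (which severely restricts how edges may appear) and the local richness needed for IP and the tree property; one must exhibit the IP witnesses and the tree-property parameters within the ``special'' skeleton without inadvertently creating a forbidden zigzag cycle, while still realizing arbitrary adjacency patterns on finite sets. Checking that the concrete Hajnal-Komj\'{a}th graph supports both demands simultaneously is the crux of the argument.
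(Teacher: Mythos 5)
Your plan for items (1)--(3) is broadly in the spirit of the paper, but note that the actual Hajnal--Komj\'ath construction is not a family of branches through a tree: it is a transfinite recursion over $\aleph_1$ levels $T_\alpha$ (each of size $2^{\aleph_0}$), where at limit levels each new vertex is assigned as its down-neighbourhood a countable set $\{x_n : n<\omega\}$ cofinal in $\alpha$ whose members satisfy a ``non-covering'' condition; the partial order is essentially by level. The lower bound $\chi(G)\geq\aleph_1$ is not a Fodor/pressing-down argument on a tree but a dichotomy between ``small'' and ``large'' colours combined with the covering condition: one assembles a countable set $W$ meeting every large colour and not covered below some fixed $\gamma$, and the vertex whose neighbourhood is $W$ then cannot receive any colour. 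Your IP witness is essentially the paper's (pick $x_n\in T_n$ and realize every unbounded $W\subseteq\omega$ as a neighbourhood in $T_\omega$), so item (3) is fine modulo the construction.

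The genuine gap is in item (4). You propose to exhibit the tree property for $\varphi(x,y)=E(x,y)$ directly, with $2$-inconsistency among siblings justified by ``any common $E$-neighbour would produce a forbidden zigzag cycle.'' That inference fails: a common neighbour of two non-adjacent vertices gives a path of length $2$, not a circuit, so specialness rules nothing out; and you cannot make the siblings pairwise adjacent either, since any special graph is triangle-free (three pairwise $\prec$-comparable vertices form a chain and hence a forbidden circuit with $n=3$). It is not even clear that the edge relation itself has the tree property here. The paper avoids all of this with a one-line indirect argument: $G$ is special, hence triangle-free, hence contains no cliques of size $3$; since $|G|=2^{\aleph_0}$ and $\chi(G)\geq\aleph_1$, Proposition \ref{P:cannot embed infinite complete graph} (with $\mu=\aleph_0$) would force arbitrarily large finite cliques if $\mathrm{Th}(G)$ were simple, a contradiction. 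You should replace your direct tree-property construction by this appeal to the main theorem, or else supply a genuinely new argument for the $k$-inconsistency of the sibling instances.
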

\begin{proof}
Let $V=\{T_\alpha:\alpha<\aleph_1\}$ be a collection of disjoint sets with $|T_\alpha|=2^{\aleph_0}$ for each $\alpha<\aleph_1$. We define an edge relation on $V$ turning it to a graph satisfying our desired properties. 

To define the edge relation we define for $x\in T_\alpha$ and $\alpha<\aleph_1$, $G(x)=\{y\in T_{<\alpha}: x\E y\}$ by induction on $\alpha$.

If $\alpha=\beta+1$ we let $G(x)=\emptyset$ for every $x\in T_\alpha$ so assume that $\alpha$ is a limit ordinal and that $G(x)$ has already been defined for $x\in T_{<\alpha}=\bigcup_{\beta<\alpha} T_\alpha$.

For $\gamma<\alpha$ and $y\in T_{<\alpha}$, we say that \emph{$y$ is $\gamma$-covered} if there exists $\alpha_0<\dots<\alpha_m$, with $\alpha_0\leq \gamma$, and $x_i\in T_{\alpha_i}$ with $x_m=y$ such that $x_0\E x_1\dots \E x_m$.  Note that any $y\in T_\gamma$ is $\gamma$-covered as witnessed by the trivial path.

Let $\mathcal{W}_\alpha$ be the collection of all subsets $W\subseteq T_{<\alpha}$ satisfying that
\begin{list}{•}{}
\item $W=\{x_n: n<\omega\}$ is countable
\item $x_n\in T_{\alpha_n}$ and $\alpha_n<\alpha_m$ whenever $n<m<\omega$.
\item $\sup\{\alpha_n:n<\omega\}=\alpha$.
\item no $x_n$ is $\alpha_{n-1}$-covered for $0<n<\omega$.
\end{list}
Obviously, $|\mathcal{W}_\alpha|\leq 2^{\aleph_0}$; choose some enumeration $\mathcal{W}_\alpha=\{W_\gamma:\gamma<2^{\aleph_0}\}$ and $T_\alpha=\{t_\gamma:\gamma<2^{\aleph_0}\}$ and set $G(t_\gamma)=W_\gamma$. I.e for $y\in T_{<\alpha}$ and $x=t_\gamma\in T_\alpha$, $x\E y\iff y\in W_\gamma$. Let $G=(V,E)$.

We show that $G$ satisfies the properties listed in the statement.

We first show (1). Let $C=\langle x_0,\dots, x_{n-1}\rangle\subseteq V$, $n\geq 3$, be a circuit in $G$ with $x_i\in T_{\alpha_i}$ and
\[\alpha_0<\alpha_1<\dots<\alpha_{m-1}<\alpha_m>\alpha_{m+1}>\dots>\alpha_{n-1}>\alpha_0.\] for some $0<m<n-1$. If $\alpha_{m-1}= \alpha_{m+1}$, then as the elements of $C$ are distinct, $x_m$ would be connected to two vertices in $T_{\alpha_{m-1}}=T_{\alpha_{m+1}}$, contradicting our construction. So assume without loss of generality that $\alpha_{m-1}<\alpha_{m+1}$. Thus $x_{m-1},x_{m+1}\in G(x_m)$ and $x_{m+1}$ is $\alpha_{m-1}$-covered as witnessed by $\alpha_0<\alpha_{n-1}<\dots<\alpha_{m+1}$ and $\alpha_{m-1}\geq \alpha_0$; on the other hand since $\alpha_{m-1}<\alpha_{m+1}$ we get a contradiction.

We show (2). Let $c:G\to \aleph_0$ be a coloring. We say that a color $n<\omega$ is \emph{small} if there is a $\gamma_n<\aleph_1$ such that every point $x\in V$ with $c(x)=n$ is $\gamma_n$-covered; otherwise, call $n$ \emph{large}. For any small $n<\omega$ choose $\gamma_n$ minimal satisfying the above. Put $\gamma=\sup\{\gamma_n : n\text{ small}\}<\aleph_1$. We note that there exist large $n$; indeed take any $x\in T_{\gamma+1}$ and let $n<\omega$ be with $c(x)=n$. Since $x$ is not connected to any $y\in T_{<\gamma+1}$ it cannot be $\gamma$-covered. 

If $n<\aleph_0$ is large then for every $\alpha<\aleph_1$ there exists $x\in V$ with $c(x)=n$ which is not $\alpha$-covered. 

Let $\langle m_i<\aleph_0 :i<\omega\rangle$ be a sequence (possibly with repetitions), containing all large colors.

By definition, there exists $x_0\in T_{\alpha_0}$ with $c(x_0)=m_0$ which is not $\gamma$-covered (so necessarily $\gamma<\alpha_0$). We continue inductively and for any $n<\omega$ we find $x_n\in T_{\alpha_n}$ with $c(x_n)=m_n$ which is not $\alpha_{n-1}$-covered (so necessarily $\alpha_{n-1}<\alpha_n$). 

Let $\alpha=\sup\{\alpha_n:n<\omega\}$, it is necessarily a limit ordinal, and let $W=\{x_n :n<\omega\}\subseteq T_{<\alpha}$. By definition, there exists an element $x\in T_\alpha$ with $G(x)=W$. In particular $c(x)\neq m_n$ for all $n<\omega$, so $c(x)=k$ is small, i.e. it is $\gamma_k$-covered. But by the definition above, any $y\in G(x)=W$ is not $\gamma$-covered; so it cannot be that $x$ is $\gamma_k\leq \gamma$-covered.

To show that $\chi(G)=\aleph_1$ note that $c:V\to \aleph_1$ defined by $c(x)=\alpha_x$ for $x\in T_{\alpha_x}$ is a legal coloring.

%
%

To show (3), choose for each $n<\omega$ some $x_n \in T_n$. Then, for every unbounded subsets $W\subseteq \omega$ there exists a unique $x\in T_\omega$ with $G(x)=\{x_n : n \in W\}$, giving IP.

Item (4) is a direct consequence of Proposition \ref{P:cannot embed infinite complete graph} (with $\mu=\aleph_0$).
\end{proof}
\begin{question}
Can one find such a counterexample which is stable? simple?
\end{question}

\bibliographystyle{alpha}
\bibliography{f2059}

\begin{thebibliography}{Kom11}

\bibitem[EH66]{EH}
P.~Erd\H{o}s and A.~Hajnal.
\newblock On chromatic number of graphs and set-systems.
\newblock {\em Acta Math. Acad. Sci. Hungar.}, 17:61--99, 1966.

\bibitem[EH68]{EH-shift}
P.~Erd\H{o}s and A.~Hajnal.
\newblock On chromatic number of infinite graphs.
\newblock In {\em Theory of {G}raphs ({P}roc. {C}olloq., {T}ihany, 1966)},
  pages 83--98. Academic Press, New York, 1968.

\bibitem[EK65]{En-Ka}
R.~Engelking and M.~Karłowicz.
\newblock Some theorems of set theory and their topological consequences.
\newblock {\em Fundamenta Mathematicae}, 57(3):275--285, 1965.

\bibitem[ER60]{ErRa-triangle-free}
P.~Erd\H{o}s and R.~Rado.
\newblock A construction of graphs without triangles having preassigned order
  and chromatic number.
\newblock {\em J. London Math. Soc.}, 35:445--448, 1960.

\bibitem[HK84]{HK}
Andr\'{a}s Hajnal and P\'{e}ter Komj\'{a}th.
\newblock What must and what need not be contained in a graph of uncountable
  chromatic number?
\newblock {\em Combinatorica}, 4(1):47--52, 1984.

\bibitem[HK23]{HaKa}
Yatir Halevi and Itay Kaplan.
\newblock Saturated models for the working model theorist.
\newblock {\em Bull. Symb. Log.}, 29(2):163--169, 2023.

\bibitem[HKS22]{1196}
Yatir Halevi, Itay Kaplan, and Saharon Shelah.
\newblock Infinite stable graphs with large chromatic number.
\newblock {\em Trans. Amer. Math. Soc.}, 375(3):1767--1799, 2022.

\bibitem[HKS23]{1211}
Yatir Halevi, Itay Kaplan, and Saharon Shelah.
\newblock Infinite stable graphs with large chromatic number ii.
\newblock {\em Journal of the European Mathematical Society}, 2023.

\bibitem[Jec03]{jech}
Thomas Jech.
\newblock {\em Set theory}.
\newblock Springer Monographs in Mathematics. Springer-Verlag, Berlin, 2003.
\newblock The third millennium edition, revised and expanded.

\bibitem[Kom11]{komjath}
P\'{e}ter Komj\'{a}th.
\newblock The chromatic number of infinite graphs---a survey.
\newblock {\em Discrete Math.}, 311(15):1448--1450, 2011.

\bibitem[Pil96]{Pillay}
Anand Pillay.
\newblock {\em Geometric stability theory}, volume~32 of {\em Oxford Logic
  Guides}.
\newblock The Clarendon Press, Oxford University Press, New York, 1996.
\newblock Oxford Science Publications.

\bibitem[Rin12]{Rinot}
A.~Rinot.
\newblock The {E}ngelking-{K}arłowicz theorem, and a useful corollary, 2012.
\newblock URL:https://blog.assafrinot.com/?p=2054 (version: 2024).

\bibitem[She78]{classification}
Saharon Shelah.
\newblock {\em Classification theory and the number of nonisomorphic models},
  volume~92 of {\em Studies in Logic and the Foundations of Mathematics}.
\newblock North-Holland Publishing Co., Amsterdam-New York, 1978.

\bibitem[Sim15]{guidetonip}
Pierre Simon.
\newblock {\em A guide to {NIP} theories}, volume~44 of {\em Lecture Notes in
  Logic}.
\newblock Association for Symbolic Logic, Chicago, IL; Cambridge Scientific
  Publishers, Cambridge, 2015.

\bibitem[TZ12]{TZ}
Katrin Tent and Martin Ziegler.
\newblock {\em A course in model theory}, volume~40 of {\em Lecture Notes in
  Logic}.
\newblock Association for Symbolic Logic, La Jolla, CA; Cambridge University
  Press, Cambridge, 2012.

\end{thebibliography}
\end{document}